\documentclass[pdflatex,sn-mathphys-num]{sn-jnl}

\usepackage{multirow,booktabs}
\usepackage{amsmath,amssymb,amscd,amsthm,amsxtra, esint}

\usepackage{mathrsfs}

\usepackage{color} 
\usepackage{ulem}
\usepackage[makeroom]{cancel}

\allowdisplaybreaks[2]

\definecolor{gr}{rgb}   {0.,   0.69,   0.23 }
\definecolor{bl}{rgb}   {0.,   0.5,   1. }
\definecolor{mg}{rgb}   {0.85,  0.,    0.85}
\definecolor{yl}{rgb}   {0.8,  0.7,   0.}
\definecolor{or}{rgb}  {0.7,0.2,0.2}

\newtheorem{theorem}{Theorem} [section]

\newtheorem{lemma}[theorem]{Lemma}
\newtheorem{proposition}[theorem]{Proposition}

\newtheorem{definition}[theorem]{Definition}
\newtheorem{corollary}[theorem]{Corollary}

\newtheorem*{ackno}{Acknowledgements}

\newcommand{\noi}{\noindent}
\newcommand{\Z}{\mathbb{Z}}
\newcommand{\R}{\mathbb{R}}

\newcommand{\T}{\mathbb{T}}

\let\Re=\undefined\DeclareMathOperator*{\Re}{Re}
\let\Im=\undefined\DeclareMathOperator*{\Im}{Im}

\let\P= \undefined
\newcommand{\P}{\mathbf{P}}

\newcommand{\E}{\mathbb{E}}

\renewcommand{\L}{\mathcal{L}}

\newcommand{\al}{\alpha}

\newcommand{\Dl}{\Delta}
\newcommand{\eps}{\varepsilon}

\newcommand{\ld}{\lambda}

\newcommand{\s}{\sigma}

\newcommand{\ft}{\widehat}

\newcommand{\dt}{\partial_t}

\renewcommand{\o}{\omega}
\renewcommand{\O}{\Omega}

\newcommand{\les}{\lesssim}

\newcommand{\jb}[1]
{\langle #1 \rangle}

\newcommand{\ind}{\mathbf 1}

\newcommand{\N}{\mathbb{N}}

\numberwithin{equation}{section}
\numberwithin{theorem}{section}

\newcommand{\PP}{\mathbb{P}}

\newcommand{\C}{\mathbb{C}}

\DeclareMathOperator{\Law}{Law}

\newcommand{\dr}{\theta}
\newcommand{\Dr}{\Theta}

\newcommand{\Ha}{\mathbb{H}_a}

\newcommand{\proj}{\Pi}

\begin{document}

\title[Focusing Gibbs measures with harmonic potential]
{Phase transition for weakly interacting focusing Gibbs measures with harmonic potential}

\author*[1]{\fnm{Damiano} \sur{Greco}}

\author[2]{\fnm{Yuzhao} \sur{Wang}}

\affil*[1]{\orgdiv{School of Mathematics}, \orgname{The University of Edinburgh, and  The Maxwell Institute for the Mathematical Sciences, James Clerk Maxwell Building}, \orgaddress{\street{Peter Guthrie Tait Road}, \city{Edinburgh}, \postcode{EH9 3FD}, \state{United Kingdom}}} 

\affil[2]{\orgdiv{School of Mathematical Sciences}, \orgname{Dalian University of Technology}, \orgaddress{\street{Street}, \city{ Dalian}, \postcode{ 116024}, \state{China}; \orgdiv{School of Mathematics}, \orgname{University of Birmingham, Watson Building},  \orgaddress{\street{Edgbaston}, \city{Birmingham}, \postcode{B15 2TT}, \state{United Kingdom}}}}

\email{dgreco@ed.ac.uk}
\email{y.wang.14@bham.ac.uk}

\abstract{In this paper,  we study the Gibbs measures on Euclidean spaces associated to the focusing nonlinear
Schr\"odinger equation with harmonic potential and critical non linearity whose coupling constant tends to 0, a question initially posed by Brydges-Slade (1996) for the {\unboldmath {$\Phi^4_2$}}-model on {\unboldmath{$\mathbb{T}^2$}}.  In dimension one and
in the higher dimensional cases (with radial assumption), we establish a critical threshold below which  the frequency-truncated measures converge to the base Gaussian measure (possibly with a renormalized {\unboldmath{$L^2$}} cut-off) while,
in the supercritical regime, we prove non-convergence of the frequency-truncated measures, even up to a subsequence.
}

\keywords{Gibbs measure; normalizability; variational approach; nonlinear Schr\"odinger
equation with harmonic potential; phase transition.}


\pacs[MSC (2020)]{60H30, 81T08, 35Q53, 35Q55, 35L71}

\maketitle

\section{Introduction}\label{sec1}

In this paper, we study the Gibbs measure  $\rho$ on $\mathcal{S}'(\R^d)$,  the space of tempered distributions on $\R^d$,  formally given by 
\begin{align}
\label{gibbs_intro}
d\rho(u)=\mathcal{Z}^{-1}\exp\big(\!-H(u)\big)du, 
\end{align}
where $H$ denotes the Hamiltonian functional 
\begin{align}
\label{ham}
H(u)=\frac{1}{2}\int_{\R^d}|\nabla u|^2dx+\frac{1}{2}\int_{\R^d}|x|^2|u|^2dx-\frac{\ld}{p}\int_{\R^d}|u|^pdx,
\end{align}
with coupling $\ld\in \R$ representing the strength of the  interaction, $p>2$ being a real number,  and  $\mathcal{Z}$ denoting the normalizing constant (partition function)\footnote{Hereafter, we use $\mathcal{Z},Z$  to denote various normalization constants whose values may change line by line.}.  In view of the definition of the Hamiltonian $H$ in \eqref{ham}, as already discussed in \cite{RSTY}, the first attempt to properly define the measure $\rho$ consists in interpreting \eqref{gibbs_intro} as 
\begin{align}
\label{gibbs_introbis}
d\rho(u)=\mathcal{Z}^{-1}\exp\bigg(\frac{\ld}{p}\int_{\R^d}|u|^pdx\bigg)d\mu(u),
\end{align}  
where $\mu$ denotes the Gaussian free field on $\R^d$ formally given by 
\begin{align}
\label{Gaussian}
d\mu(u)={\mathcal{Z}}^{-1}\exp\bigg(-\frac{1}{2}\int_{\R^d}\Big(|\nabla u|^{2}+|x|^2|u|^2\Big)dx\bigg)du.
\end{align}
\noi As a result, one can try to show that $\rho$ in \eqref{gibbs_introbis} defines a probability measure with density (with respect to the Gaussian measure $\mu$ in \eqref{Gaussian}) given by the exponential of the interaction potential $\frac{\ld}{p}\int_{\R^d}|u|^pdx$. 
We point out that the problem of constructing Gibbs measures as in \eqref{gibbs_intro} has attracted many authors in the recent years in connection with study of the dynamics of the nonlinear Schr\"odinger (NLS) with harmonic potential 
\begin{align}
\label{nls}
i\partial_tu=(-\Delta+|x|^2)u-\ld|u|^{p-2}u, \quad (t,x)\in \R^{1+d},
\end{align}
see e.g.,  \cite{BO96,BTT, D1,D3,D2,RSTY} and references therein for further details on the topic. For the convenience of the reader, before considering the constructibility issue for the Gibbs measure with harmonic potential in \eqref{gibbs_intro}, we provide an overview concerning the non-harmonic case in the periodic setting, namely on $\T=\R/(2\pi\Z)\simeq [0,2\pi]$. 

The seminal work of Lebowitz, Rose, Speer \cite{LRS} 
followed by Bourgain \cite{BO94}, 
initiated the study of the focusing Gibbs measures  associated to the 
nonlinear Schr\"odinger equation (NLS) on $\T$:
\begin{align}
i \dt u +  \Dl u + |u|^{p-2} u = 0.
\label{NLS1}
\end{align}
Namely, they considered the measure $\rho$ 
formally defined as:\footnote{Here, we ignore the issue at the zeroth frequency.}
\begin{align}
d\rho(u) = {{Z}}^{-1} \exp\bigg(\frac 1 p\int_{\T} |u|^p d x\bigg)d{\mu}(u),
\label{Gibbs1a}
\end{align}
with
 \begin{align}
 \label{muNLS}
 d{\mu}(u)={{Z}}^{-1}\exp\bigg(-\frac{1}{2}\int_{\T}|\partial_x u|^{2}dx\bigg)du.
\end{align}

From the point of view of the (non-)equilibrium statistical mechanics, the equation \eqref{NLS1} (as well as the higher dimensional cases) has received extensive attention;
 see 
 \cite{BO97,Tzv1,  Tzv2, OQV, LMW, BBulut, 
C1, D3, FOW}.

The density in \eqref{Gibbs1a}, because of its super-Gaussian growth, 
is never integrable with respect to the Gaussian measure ${\mu}$ in \eqref{muNLS}.
Namely, the Gibbs measure $\rho$ in \eqref{Gibbs1a}
is not normalizable to be a probability measure. In order to overcome this issue,  since the mass is a conserved quantity under the flow of the periodic NLS \eqref{NLS1}, 
Lebowitz, Rose, and Speer \cite{LRS} proposed to consider the following Gibbs measure  $\rho=\rho_{p,K}$ with an  $L^2$ cut-off:
\begin{align}
\label{GibbsNLS}
d\rho(u) ={{Z}}_{p,K}^{-1} \ind_{\{\|u\|_{L^2(\T)}\le K\}}\exp\bigg(\frac 1 p\int_{\T} |u|^p d x\bigg)d{\mu}(u),
\end{align}
  and proved  that the normalizability of $\rho$ in \eqref{GibbsNLS} depends on $p$ being $L^2$-subcritical, supercritical, or critical. Namely, ${Z}_{p,K}<\infty$ if $2<p<6$ and  $K>0$, or  $p=6$ and $K<\|Q\|_{L^2(\R)}$, while  ${Z}_{p,K}=\infty$ if $p=6$ and $K>\|Q\|_{L^2(\R)}$, or $p>6$ and $K>0$, 
where $Q$ denotes the unique\footnote{Up to symmetries.} optimizer for the Gagliardo-Nirenberg-Sobolev (GNS) inequality on $\R$ (see Proposition \ref{GNS_prop}); see also \cite{LW22} for an analogous picture in the framework of the fractional NLS. 
More recently, in \cite{OST2}, normalizability has been established at mass critical case $p=6$ with critical  cut-off size $K=\|Q\|_{L^2(\R)}$, thus completing the constructibility picture for the Gibbs measures in \eqref{GibbsNLS} and exhibiting  a phase transition phenomena  with respect to $K$ when $p=6$. 
We also refer the reader to \cite{GLLOW} for an analysis on  the sharp divergence rate of the partition function in the non-normalizability regimes described above. 

Next, we move our attention to the Gibbs measure with harmonic potential in \eqref{gibbs_introbis}. Note that, so far, we have essentially only discussed the construction of the Gibbs measures on the torus, and moving to the euclidean setting is a highly non-trivial task; see \cite{BG2, Baras, GH}.
In particular, besides the physical relevance of \eqref{nls} to describe Bose-Einstein condensates \cite{BBDBG,DS}, adding the confining potential $|x|^2$  to the Hamiltonian \eqref{ham} allows us to work directly without infrared cut-off in constructing the Gibbs measure \eqref{gibbs_introbis}. 

The problem of constructing \eqref{gibbs_introbis} arises from a work of Burq-Thomann-Tzvetkov \cite{BTT}, if $d=1$. When $\ld=-1$ (i.e., the defocusing case), it is not difficult to see that  the potential energy $\frac{\ld}{p}\int_{\R^d}|u|^pdx$ is finite $\mu$-a.surely (see e.g., \cite[Corollary 2.4]{RSTY}) and, more importantly, it is exponentially integrable with respect to $\mu$ in \eqref{Gaussian}. As a consequence, in the defocusing case  there is no issue in making sense of \eqref{gibbs_introbis}.
As a result, in the sequel we consider the focusing case  $\ld=1$. In analogy  with the focusing NLS case \eqref{Gibbs1a}, the potential term is not exponentially integrable anymore and \eqref{gibbs_introbis} is replaced by
\begin{align}
\label{GIbbs_def}
d\rho(u)=\mathcal{Z}_{d,p,K}^{-1}\ind_{\left\{|\int_{\R^d}:|u|^2:dx|\le K\right\}}\exp\bigg({\frac{\ld}{p}\int_{\R^d}}|u|^pdx\bigg)d\mu(u), 
\end{align} 
where $K$ is a positive parameter, and $\int_{\R^d}:\!\!|u|^2\!\!:dx$ denotes the \textit{Wick} renormalized $L^2$-norm defined in \eqref{wick}.  The reason for introducing such a renormalization is due to the fact that $\int_{\R^d}|u|^2dx=\infty$ $\mu$-a.surely; see \eqref{bup}.  As a result, a renormalization is needed to make sense of the cut-off in the density. We refer the reader for example to \cite{GOTT, OST} for a similar renormalization issue. 
\noi
If  $d=1$ and $p=4$ the measure in \eqref{GIbbs_def}  was first constructed by Burq, Thomann, and Tzvetkov \cite{BTT}. Later, in \cite{RSTY}, the authors completed the picture of normalizability/non-normalizability by proving that $\mathcal{Z}_{1,p,K}<\infty$ if $2<p<6$ \eqref{GIbbs_def}, while  $\mathcal{Z}_{1,p,K}=\infty$ if $p\ge 6$,  resulting in a similar picture to the one for \eqref{GibbsNLS}. However, we highlight that in the harmonic potential case the phase transition always occurs at the critical $L^2$ exponent  regardless of the size of the cut-off $K>0$. 
\noi
A similar result holds for $d\ge 2$ under radial assumptions\footnote{To be precise, it holds by defining the measure $\mu$ in \eqref{Gaussian} on $\mathcal{S}'_{{\text{rad}}}(\R^d)$, the space of radial Schwartz distributions. Moreover, an extra restriction on $p$ is required if  $d\ge 3$.}, with critical index given by $p^*(d)=2+\frac{4}{d}$. It is worth to mention that, prior to the contribution of \cite{RSTY}, in the case $d=2$ and $\ld=1$, Deng \cite{D1} proved normalizability of \eqref{GIbbs_def}  if $2<p<4$, again under radial assumptions.

Finally, let us consider the generic case $\ld>0$. Historically, in \cite[p.\,489]{BS}, Brydges and Slade proposed to study the so called $\Phi^{4}_2$-measure by taking into account the strength of the coupling interaction $\ld>0$. To be precise, because of the non-normalizability of the $\Phi^{4}_2$-measure (or in general of the $\Phi^{4}_{d}$-measure, see \cite{OST})  for any fixed $\ld>0$, they further questioned (1996) about the existence of a critical behavior of $\ld=\ld_N$, $N\in \N$, such that the frequency truncated $\Phi^{4}_{2}$-measure (up to the frequency $N$)\footnote{With a $L^2$ cut-off $K=K_N$.} converges to a measure. In particular, they were interested in detecting a ``critical point, separating the weak and strong coupling regimes'', In \cite{GOTT}, by adapting the analysis carried out in \cite{OST}, the authors proved the existence of such a critical point, complementing the critical models in $d=1,3$ studied in \cite{OST,OOT,OOT2}.  The purpose of this paper is to carry out a similar analysis to the one of \cite{GOTT}  for the Gibbs measure with harmonic potential in \eqref{GIbbs_def}.  Our main result indeed addresses the case of the $L^{2}$-critical nonlinearity $p^*(d)=2+\frac{4}{d}$ showing that 
$\ld_N\sim (K_N+\log N)^{-\frac{2}{d}}$ separates  the weak and the strong coupling regime; see Theorem \ref{main} for a precise statement. 


\subsection{Harmonic operator}
Before stating our main contributions, we recall the definition of the Harmonic oscillator which will play a role in the sequel.  
The operator $\mathcal{L}$ associated with \eqref{nls} is defined by
\begin{align*}
\mathcal{L}=-\Delta +|x|^2,
\end{align*}
where $\Delta=\sum_{i=1}^d\partial^2_{x_i}$ denotes the usual Laplace operator on $\R^d$. 
It is known that the operator $\mathcal{L}$ admits a self-adjoint extension on $L^{2}(\R)$ if $d=1,$ respectively $L^{2}_{\text{rad}}(\R^d)$ if $d\ge 2$, with eigenfunctions $\left\{h_n\right\}_{n\in \N}$
\begin{align}\label{EIGEN}
h_n(x):=\begin{cases}
c_n e^{-\frac{x^2}{2}} H_n(x), & \mbox{if }d=1, \\ c(d)\frac{\sqrt{n!}}{\sqrt{\Gamma(n+\frac{d}{2})}}L^{(\frac{d}{2}-1)}_{n}(|x|^2)e^{-\frac{|x|^2}{2}}, & \mbox{if }d\ge 2,
\end{cases}
\end{align}
where $L^{(\alpha)}_n$ denotes the Laguerre polynomial of type $\alpha$ and degree $n$,  $c_n:=(n!)^{-\frac{1}{2}}2^{-\frac{n}{2}}\pi^{-\frac{1}{4}}$, $c(d):=\frac{\sqrt{2}}{\sqrt{\text{Vol}(\mathcal{S}^{d-1})}}$,  and $H_n(x):=(-1)^n \frac{d^n}{dx^n}(e^{-x^2})$ is the $n$-th Hermite polynomial;  see also \cite{RDL} for the definitions of the Laguerre polynomials.
The corresponding eigenvalues $\left\{\nu^2_n\right\}_{n\in \N}$ are
 \begin{align}\label{ld_n}
\nu^2_n=\begin{cases} {1+2n}, & \mbox{if }d=1,\\ {d+4n}, & \mbox{if }d\ge 2.
\end{cases}
\end{align}
We also recall  the uniform bound  on the $n$--th Hermite function $h_{n}(x)$ (see \cite[eq. (2.2)]{B1}):
\begin{align}\label{bound_H}
\|h_{n}\|_{L^{\infty}(\R)}\le \pi^{-\frac{1}{4}}, 
\end{align}
as well as the following identity which follows by combining the addition formula for the Laguerre polynomials (see e.g.,  \cite{K}) with \cite{C}
\begin{align}\label{L_formula}
L^{(\frac{d}{2}-1)}_{n}(|x|^2)=\frac{(-1)^n}{4^n n!}\sum_{m_1+\dots +m_d=n}\prod_{i=1}^{d}H_{2m_i}(x_i).
\end{align}
Associated with the Harmonic operator $\mathcal{L}$, we define the so-called Harmonic  Sobolev spaces as follows:


\begin{definition} Let $1 \le p\le \infty$ and $s\in \R$, we define the harmonic Sobolev space $\mathcal{W}^{s,p}(\R)$ (respectively $\mathcal{W}_{\text{rad}}^{s,p}(\R^d)$ if $d\ge 2$) the space of functions (respectively radial functions) induced by the 
 norm
\begin{align*}
\|u\|_{\mathcal{W}^{s,p}(\R^d)}:=\|\mathcal{L}^{\frac{s}{2}}u\|_{L^{p}(\R^d)}.
\end{align*}
If $p=2$ we set $\mathcal{H}^s:=\mathcal{W}^{s,2}$, and for  $u$ of the form $u=\sum_{n\ge 0} c_n h_n$ we have 
\begin{align}\label{H1_curl}
\|u\|^2_{\mathcal{H}^s(\R^d)}:=\|\mathcal{L}^{\frac{s}{2}}u\|^2_{L^{2}(\R^d)}=\sum_{n\ge 0} \nu^{2s}_n |c_n|^{2}. 
\end{align}
\end{definition}

Finally, we recall the Gagliardo-Nirenberg-Sobolev inequality on $\R^d$ with $d\ge 1$. As far as concerns existence of optimizers for such inequality, we refer the reader to \cite{Nagy} for the one dimensional case, and to \cite{Weinstein} for $d\ge 2$.
In general,  we refer for example to \cite[Proposition 3.1]{Frank} and \cite[Theorem 2.1]{BFV14} and references therein for more general Gagliardo-Nirenberg-Sobolev inequalities. 
\begin{proposition}\label{GNS_prop}
Let $p > 2$ if $d=1,2 $, or  $2 < p < \frac{2 d}{d - 2 }$ if $d \ge 3$. Then, 
\begin{align}
\|u\|^p_{L^p(\R^d)}\le \textup{C}_{\textup{GNS}}(p,d)\|u\|^{\frac{(p-2)d}{2}}_{\dot{H}^{1}(\R^d)}\|u\|^{2+\frac{p-2}{2}(2-d)}_{L^2(\R^d)}. 
\label{GNS_0}
\end{align}
Moreover, the optimal constant $\textup{C}_{\textup{GNS}}$ is achieved by a function in $H^1(\R^d)$. 
\end{proposition}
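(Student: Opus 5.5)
The plan is to prove the inequality in three steps: reduce it to a one-parameter family of scalings, prove it on the Schwartz class, and then find an optimizer by a compactness argument. Throughout, write $\theta=\frac{(p-2)d}{2p}$, so that $\theta p=\frac{(p-2)d}{2}$ and $(1-\theta)p=2+\frac{p-2}{2}(2-d)$. The stated range of $p$ is exactly what makes $\theta\in(0,1)$; when $d\ge3$ it also guarantees $p<2^*=\frac{2d}{d-2}$.

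\textbf{Step 1: scaling check.} The exponents in \eqref{GNS_0} are forced by invariance under $u\mapsto a\,u(b\,\cdot)$. One checks directly that both sides scale in the same way under this map.

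\textbf{Step 2: the inequality.}
\begin{itemize}
\item For $d\ge3$, interpolate $L^p$ between $L^2$ and $L^{2^*}$ by H\"older, giving $\|u\|_{L^p}\le\|u\|_{L^2}^{1-\theta}\|u\|_{L^{2^*}}^{\theta}$. Then apply the Sobolev inequality $\|u\|_{L^{2^*}}\lesssim\|u\|_{\dot H^1}$. The interpolation exponent produced by H\"older coincides with $\theta$, as Step 1 forces.
\item For $d=1,2$, work in frequency. Split $\ft u$ at a radius $R$. Use Hausdorff--Young together with Cauchy--Schwarz on the low-frequency and high-frequency pieces, bounding the high piece through $|\xi|^{-1}|\xi|\,|\ft u|$. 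Then optimize over $R$. Alternatively, one can use Sobolev embedding $H^s\subset L^p$ and interpolate $\|u\|_{\dot H^s}\le \|u\|_{L^2}^{1-s}\|u\|_{\dot H^1}^{s}$ with $s=\theta$. This second route is cleaner.
\item Density of $\mathcal S$ in $H^1$ extends the inequality to all of $H^1$.
\end{itemize}

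\textbf{Step 3: existence of an optimizer (the main obstacle).} Consider the Weinstein functional $J(u)=\|u\|_{L^p}^p\big/\big(\|u\|_{\dot H^1}^{\theta p}\|u\|_{L^2}^{(1-\theta)p}\big)$ on $H^1\setminus\{0\}$. Take a maximizing sequence. By Step 1, normalize it so that $\|u_n\|_{L^2}=\|\nabla u_n\|_{L^2}=1$. Replacing $u_n$ by $|u_n|^*$ (symmetric decreasing rearrangement) does not decrease $J$, by the P\'olya--Szeg\H{o} inequality, so we may assume radial decreasing functions. Such functions obey the uniform decay bound $|u(x)|\lesssim|x|^{-d/2}\|u\|_{L^2}$.

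Combining this decay with local Rellich compactness gives compactness of radial bounded $H^1$ sequences in $L^p$. Here one needs $2<p<2^*$; for $d=1$ the decay bound does the work directly. Hence, along a subsequence, $u_n\to u$ in $L^p$ and weakly in $H^1$. Since $\|u\|_{L^p}^p=\sup J>0$, the limit is nonzero. Weak lower semicontinuity then gives $J(u)\ge\sup J$, so $u$ is an optimizer. The delicate point is the compactness step, which needs the strict inequalities $p>2$ and $p<2^*$ to rule out mass escaping or concentrating. As an alternative, one may invoke the cited works of Nagy and Weinstein directly.
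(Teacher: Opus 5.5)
Your proposal is correct, but it does something the paper does not do. The paper gives no proof of this proposition: it cites Nagy for $d=1$ and Weinstein for $d\ge 2$, with Frank--Lenzmann--Silvestre and Bellazzini--Frank--Visciglia for more general versions. That is the alternative you mention in your last sentence.

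\textbf{How your route differs.} What you wrote is essentially a self-contained reconstruction of Weinstein's argument.
\begin{itemize}
\item The inequality comes from H\"older plus Sobolev for $d\ge 3$, and from $\dot H^{\theta}\subset L^p$ plus Fourier-side interpolation for $d=1,2$.
\item The maximizer of the Weinstein functional comes from scaling normalization, symmetric decreasing rearrangement of $|u_n|$ (Pólya--Szegő together with the diamagnetic inequality), uniform $L^p$ tails from $|u(x)|\les |x|^{-d/2}\|u\|_{L^2}$ (this uses $p>2$), local Rellich compactness (this uses $p<2^*$), and weak lower semicontinuity of both denominator norms.
\item Because you use monotone rearrangement rather than mere radiality, the decay bound also works for $d=1$, where compactness for merely even $H^1$ functions fails.
\end{itemize}

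\textbf{What each approach buys.} Your route is self-contained and shows exactly where each restriction on $p$ enters. The citations give more than the proposition states: they identify the optimizer, up to symmetries, with the ground state $Q$. The paper relies on this implicitly in the introduction, where it speaks of the threshold $\|Q\|_{L^2(\R)}$.

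\textbf{Two small fixes.}
\begin{itemize}
\item In Step 2 you need the homogeneous embedding $\dot H^{\theta}\subset L^p$ to get a scale-invariant bound. Alternatively, combine the inhomogeneous $H^{\theta}\subset L^p$ with the scaling from Step 1.
\item In Step 3, rearrange first and rescale afterwards. Pólya--Szegő may push $\|\nabla |u_n|^*\|_{L^2}$ below $1$, which would break your normalization. Rescaling preserves both radial monotonicity and $J$.
\end{itemize}
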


In particular, by \eqref{GNS_0} and \eqref{H1_curl} we infer
\begin{align}\label{GNS}
\|u\|^p_{L^p(\R^d)}\le\textup{C}_{\textup{GNS}}(p,d) \|u\|^{\frac{(p-2)d}{2}}_{\mathcal{H}^{1}(\R^d)}\|u\|^{2+\frac{p-2}{2}(2-d)}_{L^2(\R^d)}. 
\end{align}

\subsection{Measure construction and main result}
In this subsection, we give more details on the definition of the Gibbs measure \eqref{GIbbs_def} if $d=1$, and if $d\ge 2$ with radial assumptions. Furthermore, we state our main results concerning normalizability in the weakly coupling regime and non-normalizability in the strong coupling regime. 
For simplicity of the presentation, we focus on the case $d\ge 2$, with the understanding that the same holds for $d=1$ without any radial assumption.

By using the eigenbasis $\left\{h_n\right\}_{n\in \N}$, any distribution $u\in \mathcal{S}'_{\text{rad}}(\R^d)$ can be written  as follows
\begin{align*}
u=\sum_{n\ge 0}u_n h_n, \quad u_n=\langle u,h_n\rangle.  
\end{align*}
Next, given $N\in \N$, we define the spectral projector $\P_{N}$ by 
\begin{align}
\label{proj}
\P_{N}u=\P_{N}\Big(\sum_{n\ge 0}u_n h_n\Big):=\sum_{n=0}^{N}u_n h_n. 
\end{align}
As already explained in \cite[Section 1.2]{RSTY},  the Gaussian measure in \eqref{Gaussian} can be properly defined as the induced probability measure under the map
\begin{align}
\o\in \O \longmapsto u^{\o}= \sum_{n=0 }^{\infty} \frac{ g_n(\o)}{\nu_n} h_n, 
\label{map}
\end{align}
where $\left\{g_n \right\}_{n\in \N}$ is a sequence of independent standard complex-valued Gaussian random variables
on a probability space $(\Omega, \mathcal{F}, \PP)$.\footnote{In particular,
$\Re g_n$ and $\Im g_n$ are real-valued Gaussian random variables
with mean 0 and variance $\frac 12$.}
Note that,  if $u^{\o}$ is as in \eqref{map}, $\P_{N}u^{\o}$ is a mean zero, complex valued Gaussian random variable with variance
\begin{align*}
\s_N=\E\big[|\P_{N}u^{\o}|^2\big]=\sum_{n=0}^{N}\frac{h^{2}_n(x)}{\nu^2_n}, 
\end{align*}
from which 
\begin{align}\label{bup}
\E\big[\|\P_{N}u^{\o}\|^2_{L^{2}(\R^d)}\big]=\int_{\R^d}\s_N(x)dx=\sum_{n=0}^{N}\frac{1}{\nu^2_n}\sim \log N\to +\infty,\quad  \text{as}\ N\to +\infty.
\end{align}
As a result, a typical  function in the support of $\mu$ fails to be square integrable.
Indeed, one can prove that $\{\P_N u^{\o}\}_N$ is cauchy in  $L^{2}(\Omega, \mathcal{H}_{\text{rad}}^{-s}(\R^d))$, for any $s>0$.  
The above analysis,  motivates us to introduce the Wick renormalized power  $:\!|\P_{N}u|^2\!:$ via
\begin{align*}
:\!|\P_{N}u|^2\!: \stackrel{\text{def}}{=}|\P_{N}u|^2-\s_N. 
\end{align*}
It is also well known, cf., \cite[Lemma 3.6]{BTT}, that $\int_{\R^d}\!\!:\!\!|\P_{N}u(x)|^2\!\!:\!dx$ is Cauchy sequence in $L^2(\mathcal{H}_{\text{rad}}^{-s}(\R^d), d\mu)$ for any $s>0$. In particular, we can define  
\begin{align}\label{wick}
\int_{\R^d}:\!|u(x)|^2\!:dx \stackrel{\text{def}}{=}\lim_{N\to +\infty}\int_{\R^d}:\!|\P_{N}u(x)|^2\!:dx.
\end{align}
 On the other hand, by Corollary \ref{Lp_conv}, one can see that $u^\o$ in \eqref{map} belongs to $L^{p}(\R^d)$ a.s.,  provided $p$ satisfies the same assumptions of Proposition \ref{GNS_prop}. Hence, no renormalization is needed for the potential term. Then, if we define
\begin{align}
\label{Wick0}
R_{\,p,N}(u):=\frac{1}{p}\int_{\R^d}|\P_{N}u|^{p}dx, 
\end{align}
and $\{\ld_N\}_{N\in \N}$ is a sequence of positive numbers, we consider the truncated Gibbs measure $\rho_{p,N}$:
\begin{align}
\label{trunc_gibbs}
d\rho_{p,N}(u)=\mathcal{Z}^{-1}_{p,N}\ind_{\left\{|\int_{\R^d}:|\P_{N}u|^2:dx|\le K_N\right\}}\exp\big(\ld_N R_{\, p,N}(u)\big)d\mu(u),
\end{align}
where  we formally wrote $\mu$ by 
\begin{align}
\label{mu_def}
d\mu={\mathcal{Z}}^{-1}\exp\Big(-\frac{1}{2}\|u\|^2_{\mathcal{H}^{1}(\R^d)}\Big)du,
\end{align}
again with the understanding that it properly corresponds to $\text{Law}(u^\o)$ with $u^\o$ as in \eqref{map}.
When $\ld_N\equiv\ld>0$ (focusing case)\footnote{Here, the notation ``$\ld_N \equiv \ld$''
means that the sequence $\{\ld_N\}_{N\in \N}$
is constant, taking the value $\ld$.} and $K_N\equiv K$, it has been proved that the Gibbs measure $\rho$ is normalizable if and only if $p<p^*(d):=2+\frac{4}{d}$, cf. \cite[Theorems 1.4-1.6]{RSTY}\footnote{Note that, $p^*=2+\frac{4}{d}$ satisfies the assumptions of Proposition \ref{GNS_prop}.}.
Here, by following the approach of \cite{GOTT}, we study the limit of $\rho_N$ when $\ld_N\to 0$ and $K_N\to K\in (0,\infty]$.  In particular, we show the existence of a critical threshold separating the normalizability and non-normalizability regime in correspondence of the mass critical exponent $p^*(d)$.  
\begin{theorem}
\label{main}
Let $d\ge 1$ and $p^*=2+\frac{4}{d}$. Let $\left\{\ld_N\right\}_{N\in \N}$ be a non-increasing sequence sequence of positive numbers tending to $0$ and $\left\{K_N\right\}_{N\in \N}$ be a non-decreasing sequence of positive numbers. Then, there exists $\ld^*\ge \ld_*>0$ such that the following hold:

\smallskip
\noi
\textup{(i) (weakly coupling regime).}
Suppose that 
\begin{align}
\ld_N\le \ld_* (K_N+\log N)^{-\frac{2}{d}}
\label{K1}
\end{align}

\noi
for any $N \in \N$.
Then, 
given any $r\geq 1$ we have
\begin{align*}
\sup_{N\in\N}\mathcal{Z}_{p^*\!,\,N}:=\sup_{N\in\N}\Big\|\ind_{\{|\int_{\R^d} \, :|\P_{N} u|^2: \, dx| \, \leq K_N\}} e^{ \ld_N R_{p^*\!,N}(u)}\Big\|_{L^r(\mu)}<\infty.
\end{align*}

\noi
In particular, we have
\begin{align}
\lim_{N\rightarrow\infty}
\ind_{\{|\int_{\R^d} \, :|\P_{N} u|^2: \, dx| \, \leq K_N\}}  
e^{\ld_N  R_{p^*\!,N}(u)}=
\ind_{\{|\int_{\R^d} \, :|u|^2: \, dx| \, \leq K\}}  \quad     \text{in}\ L^r(\mu), 
\label{exp2}
\end{align}

\noi
where 
$K = \lim_{N \to \infty}K_N \in (0, \infty]$.
Here, 
\[\int_{\R^d} :\!|u|^2\!:dx = \lim_{N \to \infty}\int_{\R^d} :\!|\P_{N} u|^2\!:dx,\]

\noi
where the limit is understood in 
$L^r(\mu)$ and $\mu$-almost surely
as in Lemma \ref{LEM:conv0}.
As a consequence, 
we have

\smallskip
\begin{itemize}
\item[\textup{(i.a)}] 
If $K  =  \lim_{N \to \infty}K_N  = \infty$, 
then the truncated Gibbs measure $\rho_N$
in \eqref{trunc_gibbs}
converges in total variation to the base Gaussian measure $\mu$ in \eqref{Gaussian}
as $N \to \infty$.

\smallskip
\item[\textup{(i.b)}]
If $K = \lim_{N \to \infty}K_N < \infty$, 
then the truncated Gibbs measure $\rho_N$
in \eqref{trunc_gibbs}
converges in total variation to the base Gaussian measure
with a renormalized $L^2$ cut-off\textup{:}
\begin{align*}
\ind_{\{|\int_{\R^d} \, :|u|^2: \, dx| \, \leq K\}}  d\mu, 
\end{align*}
	
\noi
as $N \to \infty$.

\end{itemize}

\smallskip
\noi
\textup{(ii) (strongly coupling regime).}
Suppose that 
\begin{align}
\ld_N\ge \ld^{*}(K_N+\log N)^{-\frac{2}{d}}
\label{K2}
\end{align}

\noi
for any sufficiently large $N \gg 1$.
Then, we have 
\begin{align}
\sup_{N\in\N}\mathcal{Z}_{p^*\!,\,N}=
\sup_{N\in\N}\Big\|\ind_{\{|\int_{\R^d} \, :|\P_{N} u|^2: \, dx| \, \leq K_N\}} e^{ \ld_N  R_{p^*\!,N}(u)}\Big\|_{L^1(\mu)}
=\infty.
\label{Gibbs3}
\end{align}

\noi
As a consequence, 
the truncated Gibbs measure $\rho_N$ in \eqref{trunc_gibbs}
does not converge   to any  limit in total variation, even up to a subsequence.

\end{theorem}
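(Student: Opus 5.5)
The plan is to follow the variational approach of \cite{GOTT}, itself adapted from \cite{OST}, and to use the Bou\'e--Dupuis formula on both sides.

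\textbf{(i) Weak coupling.} We write
\begin{align*}
\mathcal{Z}_{p^*,N}^r=\E_\mu\Big[\exp\big(r\ld_N R_{p^*,N}(u)\ind_{\{|\int :|\P_N u|^2:dx|\le K_N\}}\big)\Big]
\end{align*}
up to the harmless indicator, and replace the sharp cut-off by the soft penalty $-A\big|\int :|\P_N u|^2:dx\big|^\gamma+AK_N^\gamma$. The Bou\'e--Dupuis formula then expresses $-\log$ of this expectation as an infimum over drifts $\dr$ with $\Dr=\mathcal{L}^{-1/2}\dr$ (projected by $\P_N$) of
\begin{align*}
\E\Big[-r\ld_N R_{p^*}(Y_N+\Dr_N)+A\Big|\int :\!|Y_N|^2\!:+2\Re\int Y_N\overline{\Dr_N}+\int|\Dr_N|^2\Big|^\gamma+\frac12\|\Dr\|^2_{\mathcal{H}^1}\Big].
\end{align*}
We bound $\int|Y_N+\Dr_N|^{p^*}\le C\int|Y_N|^{p^*}+C\int|\Dr_N|^{p^*}$. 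The first term has bounded moments by Corollary \ref{Lp_conv}. For the second we use \eqref{GNS} at $p^*=2+4/d$:
\begin{align*}
\|\Dr_N\|^{p^*}_{L^{p^*}}\le C_{\rm GNS}\|\Dr_N\|^2_{\mathcal{H}^1}\|\Dr_N\|_{L^2}^{4/d}.
\end{align*}
The key point is to control $\|\Dr_N\|_{L^2}^2$ by $K_N$ together with the cross term $\Re\int Y_N\overline{\Dr_N}$. This cross term is bounded by $\|Y_N\|_{\mathcal{H}^{-\eps}}\|\Dr_N\|_{\mathcal{H}^{\eps}}$ and then, by interpolation and Young, absorbed into $\delta\|\Dr\|^2_{\mathcal{H}^1}$ plus a random constant. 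Since $\|\P_N Y\|^2_{L^2}$ is of size $\log N$ in expectation by \eqref{bup}, on the cut-off region one gets $\|\Dr_N\|_{L^2}^2\lesssim K_N+\log N+(\text{good random terms})$. Outside this region the penalty dominates, using the $\gamma$-power, which is chosen large. Hence
\begin{align*}
r\ld_N C\|\Dr_N\|^{p^*}_{L^{p^*}}\le r\ld_N C (K_N+\log N)^{2/d}\|\Dr\|^2_{\mathcal{H}^1}+\cdots\le \tfrac14\|\Dr\|^2_{\mathcal{H}^1}+\cdots
\end{align*}
by \eqref{K1} with $\ld_*$ small, which gives the uniform bound. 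The convergence \eqref{exp2} then follows from $\ld_N R_{p^*,N}\to0$ in probability (since $\ld_N\to0$ and $R_{p^*,N}$ is tight), the a.s. convergence of the Wick mass from Lemma \ref{LEM:conv0}, and uniform integrability from the $L^{r+1}$ bound. We also need $\mu(\int:|u|^2:=\pm K)=0$ so that the indicators converge. Statements (i.a) and (i.b) follow immediately, since the partition functions converge to positive limits.

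\textbf{(ii) Strong coupling.} We again use Bou\'e--Dupuis as a lower bound and choose a specific drift: $\Dr=-Y_M+f_N$-type, i.e. we kill the low modes of $Y$ and add a rescaled GNS optimizer. Let $Q$ be the optimizer from Proposition \ref{GNS_prop}, radial, and set $f=\alpha^{d/2}Q(\alpha x)$ projected by $\P_N$. We choose its mass $\|f\|^2_{L^2}\approx K_N+\log N$ minus the contribution of the remaining Gaussian part, so that the Wick mass constraint holds, with the frequency scale $\alpha^2\ll N$. The gain is $\ld_N\|f\|^{p^*}_{L^{p^*}}/p^*=\ld_N C_{\rm GNS}\|f\|^2_{\dot H^1}\|f\|_{L^2}^{4/d}/p^*$, which by \eqref{K2} with $\ld^*$ large beats the cost $\frac12\|f\|^2_{\mathcal{H}^1}$ (where $\alpha\to\infty$ makes the $|x|^2$ part negligible). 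Sending $\alpha\to\infty$ under $\alpha\ll N^{1/2}$ therefore drives the partition function to infinity. Since the log-divergence of the sharp indicator is handled by an approximating soft cut-off as in \cite{OST}, \eqref{Gibbs3} follows.

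Non-convergence of $\rho_N$ follows by the standard argument of \cite{GOTT}: suppose $\rho_{N_k}\to\rho$ in total variation. Since the densities converge to $0$ or to the indicator $\mu$-a.e. (as in part (i)), $\rho$ would be a multiple of the cut-off Gaussian measure, contradicting the blow-up of $\mathcal{Z}$ combined with concentration on a set of vanishing $\mu$-measure.

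\textbf{Main obstacle.} I expect the hardest step to be constructing the drift in (ii). One must keep the renormalized $L^2$ constraint satisfied while extracting the full mass of order $K_N+\log N$ from the deterministic profile, and control the error terms between $\P_N f$ and $f$ in the Hermite/Laguerre basis, using \eqref{bound_H} and \eqref{L_formula}.
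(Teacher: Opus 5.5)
\textbf{Part (i) has a genuine gap: the soft penalty.} You bound $\ind_{\{|X_N|\le K_N\}}\le e^{AK_N^\gamma-A|X_N|^\gamma}$ with $X_N=\int_{\R^d}:\!|\P_N u|^2\!:dx$. This first produces the prefactor $e^{AK_N^\gamma}$, which is not uniform in $N$ when $K_N\to\infty$ (case (i.a)). More seriously, the penalized expectation is not uniformly bounded even for bounded $K_N$, so ``outside this region the penalty dominates'' is false. At $p^*=2+\frac4d$ the gain is $\ld_N\|\Dr_N\|_{\mathcal H^1}^2\|\Dr_N\|_{L^2}^{4/d}$. It carries the \emph{full} power $2$ of $\|\Dr_N\|_{\mathcal H^1}$, while your penalty only sees $L^2$-level quantities. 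Concretely, take $K_N$ bounded and $\ld_N=\ld_*(K_N+\log N)^{-2/d}$ (allowed by \eqref{K1}). Use the deterministic drift $\Dr=\sqrt{c\log N}\,f_M$ with $M=N^{1/\beta}$, as in Case 2 of part (ii). The Bou\'e--Dupuis lower bound then gives:
\begin{itemize}
\item a gain $\gtrsim \ld_* c^{1+2/d}(\log N)M^2$;
\item a cost $\les c(\log N)M^2$;
\item a penalty $\les A(c\log N)^\gamma$.
\end{itemize}
For $c$ with $\ld_*c^{2/d}\gg1$, the penalized partition function therefore tends to $+\infty$ for every fixed $A$ and $\gamma$. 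At the critical exponent you must keep a hard mass constraint. The paper does this by writing $\ind_{A}e^{\ld_NR_{p^*,N}}\le e^{\ind_A\ld_NR_{p^*,N}}$ and applying Lemma~\ref{variational_f} to $F=\ld_NR_{p^*,N}\ind_A\ge0$.

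\textbf{The cross-term step does not close either, even with the sharp indicator.} Suppose you bound $|\int Y_N\overline{\Dr_N}dx|\le\|Y_N\|_{\mathcal H^{-\eps}}\|\Dr_N\|_{\mathcal H^{\eps}}$ and absorb it into $\delta\|\Dr\|_{\mathcal H^1}^2$. Then $\|\Dr_N\|_{L^2}^{4/d}$ contains a positive power of $\|\Dr_N\|_{\mathcal H^1}$, and the GNS bound becomes superquadratic in $\|\Dr_N\|_{\mathcal H^1}$. Since $\|\Dr_N\|_{\mathcal H^1}$ can be as large as $N^{1/2}\|\Dr_N\|_{L^2}$, no factor $(\log N)^{-2/d}$ compensates.

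The cross term must instead be absorbed at the $L^2$ level, via $2|\int Y_N\overline{\Dr_N}dx|\le\frac12\|\Dr_N\|_{L^2}^2+2\|Y_N\|_{L^2}^2$. This leaves a random factor $\big(1+|\int:\!|Y_N|^2\!:dx|/(K_N+\log N)\big)^{2/d}$ in front of $\ld_*\|\Dr_N\|_{\mathcal H^1}^2$. The event where this factor is large must be treated separately; knowing $\|Y_N\|_{L^2}^2\sim\log N$ in expectation is not enough.

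The paper handles this regime (case \eqref{H5}) as follows:
\begin{itemize}
\item it decomposes $\Dr_N=\sum_j(\al_j\proj_jY_N+w_j)$ dyadically and obtains $\|\Dr_N\|_{L^2}^2\les\log(2+\|\Dr_N\|_{\mathcal H^1})+B_{1,N}^{1/2}+B_{2,N}$;
\item it exponentiates this to get \eqref{H7};
\item it uses Lemma~\ref{LEM:Young} together with $\|\Dr_N\|^2_{\mathcal H^1}\les N\|\Dr_N\|^2_{L^2}$ to trade the resulting $\log N$ against $(\log N)^{-1}$.
\end{itemize}
A cheaper repair of your argument also works. The Wick mass $\int:\!|Y_N|^2\!:dx$ has uniform exponential tails, so the bad event has probability $\les N^{-c\ld_*^{-d/2}}$. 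Once $\ld_*$ is small, this beats the a priori bound $\ld_N\|\Dr_N\|_{L^{p^*}}^{p^*}\les N\ld_N\|\Dr_N\|_{L^2}^{2+4/d}$.

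\textbf{Part (ii).} Your strategy is the right one, but $-Y_M$ is not an admissible drift. It would require $\dr=-\mathcal L^{1/2}\frac{d}{dt}Y_M$, which is not in $L^2([0,1];L^2)$. You need a time-regular substitute such as $Z_M$ from Lemma~\ref{lem13bis}, with the profile's mass calibrated by $\E[2\Re\int Y_N\overline{Z_M}dx-\|Z_M\|^2_{L^2}]$. That calibration, together with the small variance of $:\!\|Y_N-Z_M\|^2_{L^2}\!:$, is what makes the constraint hold with probability at least $\frac12$. The projection error you single out as the main obstacle is in fact easy: by \eqref{H_1}, $\|(1-\P_N)f_M\|_{\mathcal H^1}^2\le\nu_N^{2-2s}\|f_M\|_{\mathcal H^s}^2\les N^{1-s}M^{2s}$.
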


Note that, Theorem \ref{main} exhibits the same structure of \cite[Theorem 1.1]{GOTT} where the authors studied log-correlated Gibbs measures on the $d$-dimensional torus with weakly
interacting focusing quartic potentials. In particular, they show that the weakly interacting model proposed by Brydges and Slade \cite{BS} is critical when $\ld_N\sim (K_N+\log N)^{-1}$. Thus, Theorem \ref{main} shows that in the case of the focusing nonlinear
Schr\"odinger equation on $\R^d$ with harmonic potential and critical non linearity, $\ld_N\sim (K_N+\log N)^{-\frac{2}{d}}$ represents a critical behavior for the coupling constant, dividing the normalizability and the non-normalizability regime. Moreover, we see that the truncated focusing Gibbs measures \eqref{trunc_gibbs} essentialy trivialize, either converging to the base Gaussian free field on $\R^d$ (possibly with a suitable $L^2$ cut-off), or diverging. 
\section{Notations and preliminary lemmas}
Let $A\les B$ denote an estimate of the form $A\leq CB$ for some constant $C>0$. We write $A\sim B$ if $A\les B$ and $B\les A$, while $A\ll B$ denotes $A\leq c B$ for some small constant $c> 0$. 
We use $C>0$ to denote various constants, which may vary line by line.
\subsection{Deterministic estimates}
First, we recall the following general version of the Young's inequality; see  \cite[Theorem 156 on p.\,111]{HLP}. 
\begin{lemma}\label{LEM:Young}
Let $f$ be a strictly increasing function on $\R_+$
such that   $f(0) = 0$ and its inverse $f^{-1}$ is also strictly increasing.
Then, for any $a, b \ge 0$, we have
\begin{align}
ab \le \int_0^a f(x) dx   +  \int_0^b f^{-1}(x) dx 
\label{Young1}
\end{align}
	
\noi
with equality if and only if $b = f(a)$.
In particular, applying \eqref{Young1}
to  $f(x) = e^x - 1$ and $f^{-1}(x) = \log(1+x)$
\textup{(}with $b$ replaced by $b-1$\textup{)}, 
we have 
\begin{align*}
ab \le e^{a} + b \log b - b
\end{align*}
	
\noi
for any $a \ge  0$ and $b \ge 1$.
	
\end{lemma}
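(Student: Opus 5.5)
The statement immediately above is the Young-type inequality of Lemma \ref{LEM:Young}, which is classical. I plan to prove the general inequality \eqref{Young1} by a comparison of areas, and then derive the special case by a direct substitution.

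\emph{General inequality.} Since $f$ is continuous and strictly increasing with $f(0)=0$, the quantity $\int_0^a f(x)\,dx$ is the area of the region under the graph of $f$ over $[0,a]$. Likewise, $\int_0^b f^{-1}(y)\,dy$ is the area of the region to the left of the graph, i.e.\ between the graph and the $y$-axis, for $y\in[0,b]$. Analytically, I would first prove the identity
\[
\int_0^a f(x)\,dx+\int_0^{f(a)} f^{-1}(y)\,dy = a f(a).
\]
This follows by the substitution $y=f(x)$ followed by an integration by parts. Alternatively, differentiate both sides in $a$: the derivatives agree, $f(a)+a f'(a)$, and both sides vanish at $a=0$. (I would use the area picture if $f$ is not differentiable.)

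\emph{Comparison.} Set
\[
G(b)=\int_0^b f^{-1}(y)\,dy - ab + \int_0^a f(x)\,dx .
\]
By the identity above, $G(f(a))=0$. Moreover $G'(b)=f^{-1}(b)-a$, which is negative for $b<f(a)$ and positive for $b>f(a)$. Hence $G$ attains its strict global minimum at $b=f(a)$. This gives $G(b)\ge 0$, i.e.\ \eqref{Young1}, with equality if and only if $b=f(a)$. If $f$ is only continuous, I use the fundamental theorem of calculus for $\int_0^b f^{-1}$, which is valid since $f^{-1}$ is continuous.

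\emph{Special case.} Take $f(x)=e^x-1$, so $f^{-1}(y)=\log(1+y)$, and apply \eqref{Young1} with $b-1\ge 0$ in place of $b$:
\[
a(b-1)\le \int_0^a (e^x-1)\,dx+\int_0^{b-1}\log(1+y)\,dy .
\]
The first integral equals $e^a-1-a$. The second equals $b\log b-b+1$. Adding $a$ to both sides gives $ab\le e^a+b\log b-b$.

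There is no real obstacle here. The only care needed is justifying the identity when $f$ is merely continuous, and the area interpretation, or the monotone-function argument for $G$, handles that.
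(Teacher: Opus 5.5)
Your proof is correct. The paper gives no proof of its own: it cites Hardy--Littlewood--P\'olya (Theorem 156) for \eqref{Young1} and reads off the special case. Your argument therefore supplies the classical proof behind that citation. Your computation $\int_0^a(e^x-1)\,dx=e^a-1-a$, $\int_0^{b-1}\log(1+y)\,dy=b\log b-b+1$, followed by adding $a$, is exactly the intended derivation of the second inequality.

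Two points should be made explicit rather than waved at.

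\emph{Continuity of $f$.} It is not among the stated hypotheses, but it follows. Since $\int_0^b f^{-1}$ must make sense for every $b\ge 0$, $f$ is a strictly increasing bijection of $[0,\infty)$ onto itself. Such a map is continuous with continuous inverse, and this is what justifies $G'(b)=f^{-1}(b)-a$.

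\emph{The identity for merely continuous $f$.} The identity $\int_0^a f+\int_0^{f(a)}f^{-1}=af(a)$ should be proved, not read off a picture. One way is Tonelli. On $[0,a]\times[0,f(a)]$ you have $x<f^{-1}(y)$ if and only if $f(x)<y$. So the two integrals are the areas of the disjoint sets $\{y<f(x)\}$ and $\{y>f(x)\}$ inside this rectangle. Their union misses only the graph of $f$, which is a null set. Alternatively, take a partition $0=x_0<\dots<x_n=a$ and set $y_i=f(x_i)$. The lower Riemann sum of $f$ plus the upper Riemann sum of $f^{-1}$ telescopes to $\sum_i\bigl(x_if(x_i)-x_{i-1}f(x_{i-1})\bigr)=af(a)$. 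Both meshes tend to zero together by uniform continuity of $f$.

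Finally, the paper only ever uses the smooth case $f(x)=e^x-1$, where your differentiation argument already suffices. That case is in fact a one-liner: for $b>0$, the convex function $a\mapsto e^a-ab$ attains its minimum $b-b\log b$ at $a=\log b$.
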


In what follows, we recall the following result whose  proof can be found in  \cite[Lemma 4.1]{RSTY}.

Let  $f:\R^d\to \R$ be a Schwartz function such that $\|f\|_{L^2(\R^d)}=1$ and $\ft{f}$ is supported in $\left\{\frac{1}{2}< |\xi|\le 1\right\}$. If $d\ge 2$ we further assume that $f$ is radial. Let $M>0$. We define $f_M$ by 
 \begin{align}\label{fM}
 f_M(x):=M^{\frac{d}{2}}f(Mx).
 \end{align}  
Then, the following holds:
\begin{lemma}\label{asymptotics}
Let $s\in \R$ and $f_M$ as in \eqref{fM}. Then, 
\begin{align}	
&\int_{\R^d}|f_M|^2dx=1, \label{L2scaling}\\
& \int_{\R^d} |f_M|^{p}dx \sim M^{\frac{dp}{2}-d}, \label{LPscaling}\\
& \int_{\R^d} |\mathcal{L}^{\frac{s}{2}}f_M|^{2}dx\les M^{2s},
\label{H_1}
\end{align}
for any $p>0$, $M\gg 1$. 
\end{lemma}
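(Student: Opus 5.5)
We prove the three claims of Lemma~\ref{asymptotics} by scaling. Identities \eqref{L2scaling} and \eqref{LPscaling} follow from the change of variables $y=Mx$. The bound \eqref{H_1} reduces to the integer cases $s=0$ and $s=k\ge 1$, followed by interpolation and duality.

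For \eqref{L2scaling}, the substitution gives $\int|f_M|^2dx=M^{d}\int|f(Mx)|^2dx=\|f\|_{L^2}^2=1$. In the same way,
\begin{align*}
\int_{\R^d}|f_M|^p dx=M^{\frac{dp}{2}}\int_{\R^d}|f(Mx)|^pdx=M^{\frac{dp}{2}-d}\|f\|_{L^p}^p .
\end{align*}
Since $f$ is Schwartz and not identically zero, $0<\|f\|_{L^p}^p<\infty$ for every $p>0$. This gives \eqref{LPscaling} with constants depending only on $f$ and $p$, and in fact the relation is an identity valid for all $M>0$.

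For \eqref{H_1}, we first treat $s=2k$ with $k\in\N$. Write $\mathcal{L}^{k}f_M=(-\Delta+|x|^2)^kf_M$ and expand the product. Each term is a product of derivatives and polynomial weights $x^\alpha\partial^\beta f_M$ with $|\alpha|+|\beta|\le 2k$, where the $x$-factors count with weight one and each $\Delta$ with weight two. By scaling,
\begin{align*}
\|x^\alpha\partial^\beta f_M\|_{L^2}=M^{|\beta|-|\alpha|}\|x^\alpha\partial^\beta f\|_{L^2}.
\end{align*}
Each term has $|\beta|\le 2k$ and at most the top-order term $\Delta^k$ achieves $|\beta|=2k$ with $|\alpha|=0$. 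Hence every term is $\les M^{2k}$ for $M\ge1$, which gives $\|\mathcal{L}^{k}f_M\|_{L^2}\les M^{2k}$, that is, \eqref{H_1} with $s=2k$.

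For general $s\ge0$, we interpolate between $s=0$ and $s=2k$ with $2k\ge s$. By the spectral representation \eqref{H1_curl} and Hölder's inequality in $n$,
\begin{align*}
\sum_n\nu_n^{2s}|c_n|^2\le\Big(\sum_n|c_n|^2\Big)^{1-\frac{s}{2k}}\Big(\sum_n\nu_n^{4k}|c_n|^2\Big)^{\frac{s}{2k}}\les M^{2s}.
\end{align*}

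For $s<0$, the operator $\mathcal{L}\ge d$ is bounded below (respectively $\ge1$ when $d=1$), so $\nu_n^{2s}\le C$. Therefore $\|\mathcal{L}^{s/2}f_M\|_{L^2}^2\le C\|f_M\|_{L^2}^2=C\les M^{2s}$ is false for large $M$ as written. In this range \eqref{H_1} must instead be read as the bound $\les\max(1,M^{2s})$, and it holds trivially. Note also that the Fourier support assumption on $\ft f$ is what allows the sharper lower-order behaviour (for instance $\|f_M\|_{\dot H^{-\sigma}}\sim M^{-\sigma}$), should the paper need it. We would invoke it only if the intended statement for $s<0$ is genuinely $M^{2s}$. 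In that case we would estimate $\mathcal{L}^{s/2}$ through its comparison with $(-\Delta)^{s/2}$ on functions localized at frequency $\sim M$. This comparison is the main obstacle, since the $|x|^2$ part is not small there, and we would handle it via duality with the $s>0$ case.
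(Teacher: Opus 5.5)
\paragraph{Overall.} Your proofs of \eqref{L2scaling}, \eqref{LPscaling}, and of \eqref{H_1} for $s\ge 0$ are correct. The last of these uses the scaling $\|x^\alpha\partial^\beta f_M\|_{L^2}=M^{|\beta|-|\alpha|}\|x^\alpha\partial^\beta f\|_{L^2}$ at $s=2k$, then Hölder in the spectral sum. The paper gives no proof of its own and imports the lemma from \cite{RSTY}.

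\paragraph{The gap: $s<0$.} The statement explicitly includes $s<0$, and you do not prove it there. You correctly note that the trivial bound $\|\mathcal{L}^{s/2}f_M\|_{L^2}^2\le d^{s}$ is not $\lesssim M^{2s}$. You then replace \eqref{H_1} by $\lesssim\max(1,M^{2s})$. That only shows the trivial bound is too weak; it does not show \eqref{H_1} fails, and in fact \eqref{H_1} is true for $s<0$.

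The weakened version is also not what the paper needs. The paper applies \eqref{H_1} with $s=-1$ in \eqref{term2}. The decay $\|\mathcal{L}^{-1/2}f_M\|_{L^2}\lesssim M^{-1}$ is essential for \eqref{ineq1}: there the cross term $2\sqrt{\alpha_{M,N}}\,\Re\int Y_N\overline{\P_Nf_M}\,dx$ has variance of order $\alpha_{M,N}\|\mathcal{L}^{-1/2}\P_Nf_M\|_{L^2}^2$, with $\alpha_{M,N}\sim\log N$, while $K_N$ may stay bounded.

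Your fallback plan points the wrong way. Duality against the $s>0$ bound only gives the \emph{lower} bound $\|\mathcal{L}^{-1/2}f_M\|_{L^2}\ge\|f_M\|_{L^2}^2/\|\mathcal{L}^{1/2}f_M\|_{L^2}\gtrsim M^{-1}$. The potential is also not the obstacle, since $\||x|^2f_M\|_{L^2}=M^{-2}\||x|^2f\|_{L^2}$ is small.

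\paragraph{The missing idea.} You mention the annular support of $\widehat f$ but never use it. Because $\widehat f$ vanishes near the origin, $g_k:=(-\Delta)^{-k}f$ is Schwartz (and radial), and $f_M=M^{-2k}(-\Delta)^k(g_k)_M$ with $(g_k)_M(x)=M^{\frac d2}g_k(Mx)$. By duality,
\[
\|\mathcal{L}^{-k}f_M\|_{L^2}=\sup_{\|\psi\|_{L^2}\le1}\big|\langle f_M,\mathcal{L}^{-k}\psi\rangle\big|=M^{-2k}\sup_{\|\psi\|_{L^2}\le1}\big|\langle (g_k)_M,(-\Delta)^k\mathcal{L}^{-k}\psi\rangle\big|\lesssim M^{-2k}.
\]
This uses $\|(g_k)_M\|_{L^2}=\|g_k\|_{L^2}$ and the boundedness of $(-\Delta)^k\mathcal{L}^{-k}$ on $L^2$.

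For $k=1$ that boundedness is elementary. Integrating by parts gives $\|\mathcal{L}\phi\|_{L^2}^2=\|\Delta\phi\|_{L^2}^2+\||x|^2\phi\|_{L^2}^2+2\||x|\nabla\phi\|_{L^2}^2-2d\|\phi\|_{L^2}^2$. Combined with $\|\phi\|_{L^2}\le d^{-1}\|\mathcal{L}\phi\|_{L^2}$, this yields $\|\Delta\phi\|_{L^2}\le(1+2/d)^{1/2}\|\mathcal{L}\phi\|_{L^2}$. For $k\ge2$ it is the standard equivalence of harmonic and weighted Sobolev norms.

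This gives \eqref{H_1} at $s=-2k$. Your Hölder interpolation with $s=0$ then covers all $s\in[-2k,0]$, in particular $s=-1$.

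For $s\in[-1,0]$ alone there is a shorter route. Use $\langle\mathcal{L}\phi,\phi\rangle\ge\|\nabla\phi\|_{L^2}^2$ and $\|f_M\|_{\dot H^{-1}}\lesssim M^{-1}$ to get $|\langle f_M,\phi\rangle|\lesssim M^{-1}\|\mathcal{L}^{1/2}\phi\|_{L^2}$. So your instinct to compare $\mathcal{L}$ with $-\Delta$ was right; it just needs to be run through the Fourier support rather than through the $s>0$ bound.
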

Next, we present a technical result which will play a role in the proof of  non-normalizability, i.e., Theorem \ref{main} (ii). Indeed, unlike the proof of \cite[Theorem 1.1]{GOTT}, $\P_Mf_M\neq f_M$ preventing us to directly apply Lemma \ref{asymptotics} in estimating the norms of the drift terms defined in \eqref{drift}. However, Lemma \ref{LEM:1} (and in particular Corollary \ref{conv_norm}) allows us to say that a similar version to Lemma \ref{asymptotics}  is valid when $f_M$ is replaced by $\P_{N}f_M$ provided $M^{\beta}\sim N$ and $\beta>\beta^*$ where $\beta^*$ is defined by 
\begin{align}\label{beta}
\beta^*:=\begin{cases} 5, & \mbox{if }d=1 \\ \frac{3d}{d-1}, & \mbox{if }d\ge 2.
\end{cases}
\end{align}
\begin{lemma}\label{LEM:1}
Let  $n\in \N$ and $M>0.$ Let $h_n$ as in \eqref{EIGEN} and $f_M$ as in \eqref{fM}. Then, for any $M\gg 1$ the following hold:
\begin{itemize}
\item[(i)] If $d=1$ then
\begin{align}
\label{d1}
|\langle f_M, h_n\rangle_{L^{2}(\R)}|\les \jb{n}^{-\frac{3}{2}} M^{\frac{5}{2}};
\end{align}
\item[(ii)] If $d\ge 2$ then
\begin{align}\label{d2}
|\langle f_M, h_n\rangle_{L^{2}(\R^d)}|\les \jb{n}^{-\frac{d+3}{4}} M^{\frac{3d}{2}}.
\end{align}
\end{itemize}
\end{lemma}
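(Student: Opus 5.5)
The idea is to use self-adjointness of $\mathcal{L}$: since $\mathcal{L}h_n=\nu_n^2 h_n$, for any integer $k\ge 0$
\begin{align*}
\langle f_M,h_n\rangle=\nu_n^{-2k}\langle f_M,\mathcal{L}^k h_n\rangle=\nu_n^{-2k}\langle \mathcal{L}^k f_M,h_n\rangle .
\end{align*}
The decay in $n$ then comes from $\nu_n^{-2k}$, and the price is a bound on $\mathcal{L}^k f_M$ with an explicit power of $M$. The integration by parts is justified because $f$ is Schwartz. Expanding $\mathcal{L}^k=(-\Delta+|x|^2)^k$ gives a finite sum of terms $x^\alpha\partial^\beta f_M$ with $|\alpha|+|\beta|\le 2k$. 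Each such term equals $M^{\frac d2+|\beta|-|\alpha|}(x^\alpha\partial^\beta f)(Mx)$. Since $M\gg1$, the worst terms are the pure derivatives $\Delta^k f_M$, and these carry $M^{2k}$.

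In $d=1$, take $k=1$ and use the uniform bound \eqref{bound_H}, $\|h_n\|_{L^\infty}\le \pi^{-1/4}$:
\begin{align*}
|\langle f_M,h_n\rangle|\le \nu_n^{-2}\|\mathcal{L}f_M\|_{L^1}\les \jb{n}^{-1}M^{2}\cdot M^{\frac12-1}.
\end{align*}
The factor $M^{-1/2}$ comes from $\|g(M\cdot)\|_{L^1}=M^{-1}\|g\|_{L^1}$ combined with the prefactor $M^{1/2}$. This gives only $\jb{n}^{-1}$ decay. To reach $\jb{n}^{-3/2}$, I interpolate (geometric mean) between the $k=1$ and $k=2$ bounds. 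The $k=2$ bound gives $\jb{n}^{-2}M^{4-\frac12}$, so the geometric mean is $\jb{n}^{-3/2}M^{\frac52}$. This matches \eqref{d1} exactly, which suggests it is the intended route. Equivalently, one can write $\langle f_M,h_n\rangle=\nu_n^{-3}\langle \mathcal{L}^{3/2}f_M,h_n\rangle$ and bound $\|\mathcal{L}^{3/2}f_M\|_{L^1}$ by interpolation, but the geometric mean of the two integer-$k$ bounds is cleaner.

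For $d\ge2$, $L^\infty$ bounds on the radial $h_n$ are not uniform in $n$. I would therefore use Cauchy–Schwarz instead:
\begin{align*}
|\langle f_M,h_n\rangle|\le \nu_n^{-2k}\|\mathcal{L}^kf_M\|_{L^2}\|h_n\|_{L^2}\les \jb{n}^{-k}M^{2k},
\end{align*}
using $\|\mathcal{L}^kf_M\|_{L^2}\les M^{2k}$ from \eqref{H_1}. Taking $k=\frac{d+3}{4}$ via \eqref{H_1} with $s=\frac{d+3}{2}$ gives $\jb{n}^{-\frac{d+3}{4}}M^{\frac{d+3}{2}}$. Since $\frac{d+3}{2}\le\frac{3d}{2}$ for $d\ge 2$, this already implies \eqref{d2} when $M\ge1$.

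Alternatively, to follow the structure hinted at by \eqref{L_formula} and the $M^{3d/2}$ in the statement, one could bound $\|h_n\|_{L^\infty}\les \jb{n}^{\frac{d-2}{4}}$ (via the normalization in \eqref{EIGEN}), integrate by parts with $k=\frac{d+1}{2}$, and use the $L^1$ norm of $\mathcal{L}^kf_M$. The power of $M$ that route produces is crude but still at most $M^{3d/2}$.

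The main obstacle is justifying the fractional power $\mathcal{L}^{s/2}$ in \eqref{H_1} for non-integer $s$, and handling the cases where $\frac{d+3}{4}$ is not an integer. For this I would interpolate between the neighbouring integer values of $k$ as in the $d=1$ case, using the geometric mean of the two bounds. Everything else is bookkeeping of scaling exponents, valid since $M\gg1$.
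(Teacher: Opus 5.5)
Your proof is correct, but it takes a different route from the paper's.

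\textbf{The paper's argument.} It works with the Hermite structure directly. In $d=1$ it integrates by parts three times using $H_{n+1}'=(2n+2)H_n$. Each step replaces $h_n$ by $h_{n+1}$ at a gain of order $\jb{n}^{-1/2}$ and applies $\partial_x-x$ to $f(M\cdot)$. It then closes with \eqref{bound_H} and $\|(\partial_x-x)^3[f(M\cdot)]\|_{L^1}\lesssim M^2$. In $d\ge 2$ it expands the radial $h_n$ into tensor products of one-dimensional Hermite functions via \eqref{L_formula}. It integrates by parts twice in each coordinate, uses \eqref{bound_H} coordinatewise, and then needs the Stirling/counting estimate \eqref{count} for $\mathcal{S}_{d,n}$.

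\textbf{Your argument.} You only use $\langle f_M,h_n\rangle=\nu_n^{-2k}\langle \mathcal{L}^k f_M,h_n\rangle$.

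In $d=1$ your bounds $\jb{n}^{-1}M^{3/2}$ (for $k=1$) and $\jb{n}^{-2}M^{7/2}$ (for $k=2$), combined through $\min(a,b)\le\sqrt{ab}$, give exactly \eqref{d1}. This is the same exchange rate as the paper's: one power of $M$ per factor $\jb{n}^{-1/2}$. As in the paper, it is the $L^1$--$L^\infty$ pairing with \eqref{bound_H} that produces $M^{5/2}$; an $L^2$ argument would only give $M^3$.

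In $d\ge 2$ your Cauchy--Schwarz step is just $|\langle f_M,h_n\rangle|\le \nu_n^{-s}\|f_M\|_{\mathcal{H}^s}$, which is immediate from \eqref{H1_curl}. Since \eqref{H_1} is stated for every $s\in\R$, the fractional-power issue you flag is not actually an obstacle, and your interpolation fallback would also work. Taking $s=\frac{d+3}{2}$ gives $\jb{n}^{-\frac{d+3}{4}}M^{\frac{d+3}{2}}$, which is stronger than \eqref{d2}.

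\textbf{What your route buys.} It is shorter and more flexible. It avoids \eqref{L_formula} and the combinatorics \eqref{max_prod}--\eqref{count}, where the bookkeeping of constants is delicate. It also yields the whole family $|\langle f_M,h_n\rangle|\lesssim \jb{n}^{-s/2}M^{s}$ for $s\ge 0$. For instance, taking $s$ large in $\|\P_N f_M-f_M\|_{\mathcal{H}^1}^2\le \nu_{N+1}^{2-2s}\|f_M\|_{\mathcal{H}^s}^2$ would give the conclusion of Corollary \ref{conv_norm} for every $\beta>2$, not only for $\beta>\beta^*$.
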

\begin{proof}
In what follows, we apply a similar strategy to \cite{B1}. In view of \eqref{fM}, it is enough to control the term 
\begin{align}\label{I_term}
I:=\int_{\R^d}f(Mx)h_n(x)dx.
\end{align}
Note that, if $n=0$, by \eqref{EIGEN} and \eqref{I_term} we immediately obtain that 
\begin{align*}
|I|\les  \|f\|_{L^{\infty}(\R^d)}.  
\end{align*}
As a consequence, in the remaining part of the proof we always assume $n\ge 1$. 
\vspace{2mm}

\noi
\textbf{Case 1:} $d=1$. 

First of all, we recall the following recursive relation
\begin{align}\label{REC}
\frac{d}{dx}H_{n+1}(x)=(2n+2)H_n(x),
\end{align}
which  follows from \cite[eq. (2.7)]{B1}.
Then, by combining \eqref{EIGEN} with \eqref{REC} and integrating by part,  we deduce 
\begin{align*}
I&=-\frac{c_n}{(2n+2)}\int_{\R}e^{-\frac{x^2}{2}}H_{n+1}(x) (Mf'(Mx)-xf(Mx))dx \notag \\
& = -\frac{1}{\sqrt{2n+2}}\int_{\R} c_{n+1} e^{-\frac{x^2}{2}}H_{n+1}(x) (Mf'(Mx)-xf(Mx))dx.
\end{align*}
Note that the boundary term arising from integrating by parts vanishes because of $f$ being rapidly decaying at infinity. 

%
\noi 
Thus, by integrating by parts two more times, we obtain
\begin{align}
|I|\les \frac{M^2}{(2n+2)^{\frac{3}{2}}}\int_{\R}|f'''(x)|dx  \les n^{-\frac{3}{2}}M^2 , \label{I_bound}  
\end{align}
provided $M$ is sufficiently large.  Note that, to derive \eqref{I_bound} we also used  the uniform bound \eqref{bound_H}.
As a result,  \eqref{d1} follows from  \eqref{I_bound},  \eqref{I_term} and \eqref{fM}. 
\vspace{2mm}

 \noi  
\textbf{Case 2:} $d\ge 2$. 

From \eqref{EIGEN}, and \eqref{L_formula},  we can recast \eqref{I_term} as follows 
\begin{align}\label{I_general}
I&=c(d)\frac{(-1)^n}{4^n\sqrt{n!\Gamma(n+\frac{d}{2})}}\sum_{m_1+\dots +m_d=n}\int_{\R^d}f(Mx) \prod_{i=1}^{d}H_{2m_i}(x_i)e^{-\frac{x_i^2}{2}}dx.
\end{align}
Next, we argue similarly to the one-dimensional case described above.  As a matter of fact, by \eqref{REC} and integrating by parts twice in $x_d$ (and recalling that $f$ has fast decay), we have
\begin{align}\label{1_it}
\int_{\R^d}&f(Mx)\prod_{i=1}^{d}H_{2m_i}(x_i)e^{-\frac{x_i^2}{2}}dx\notag\\
&=\frac{1}{4(2m_d+1)(2m_d+2)}\int_{\R^{d-1}}\prod_{i=1}^{d-1}H_{2m_i}(x_i)e^{-\frac{x_i^2}{2}}\Big(\int_{\R}H_{2m_d+2}(x_d)e^{-\frac{x^2_d}{2}}g(M,x)dx_d\Big)d\bar{x},
\end{align}
where we denoted $d\bar{x}:=\prod_{i=1}^{d-1}d{x_i}$ and 
\begin{align*}
g_M(x):=M^2\partial_{x_d}f(Mx)-f(Mx)-2Mx_d\partial_{x_d}f(Mx)-x^2_df(Mx).
\end{align*}
Note that, since $f\in \mathcal{S}(\R^d)$ we have 
\begin{align*}
\int_{\R^d}|g_M(x)|dx\les M^{2-d}.
\end{align*}
Thus, by iterating the argument in \eqref{1_it} for the remaining $d-1$ variables and \eqref{EIGEN}, we infer
\begin{align}
\int_{\R^d}f(Mx)&\prod_{i=1}^{d}H_{2m_i}(x_i)e^{-\frac{x_i^2}{2}}dx\notag\\
&=\bigg(\frac{1}{4^d}\prod_{i=1}^{d}\frac{c^{-1}_{2m_i+2}}{(2m_i+1)(2m_i+2)}\bigg)\int_{\R^d}\tilde{g}_M(x)\prod_{i=1}^{d}c_{2m_i+2}H_{2m_i+2}(x_i)e^{-\frac{x^2_i}{2}}dx\notag\\
&=\bigg(\frac{\pi^{\frac{d}{4}}}{4^d}\prod_{i=1}^{d}\frac{((2m_i+2)!)^{\frac{1}{2}}2^{m_i+1}}{(2m_i+1)(2m_i+2)}\bigg)\int_{\R^d}\tilde{g}_M(x)\prod_{i=1}^{d}h_{2m_i+2}(x_i)dx,
\label{integr_formula}
\end{align}
for some function $\tilde{g}_M$ which is Schwartz and  such that 
\begin{align}
\label{gtb}
\int_{\R^d}|\tilde{g}_M(x)|dx\les M^{d}.
\end{align} 
Hence, by combining \eqref{integr_formula} with \eqref{I_general} we have (up to multiplicative positive constant independent of $n$ and $M$)
\begin{align}
I\
\sim \mathcal{S}_{d,n}\int_{\R^d}\tilde{g}_M(x)\prod_{i=1}^{d}h_{2m_i+2}(x_i)dx, 
\label{I_d}
\end{align}
where $\mathcal{S}_{d,n}$ is defined by 
\begin{align}
\mathcal{S}_{d,n}:=\frac{(-1)^n}{2^{n}\sqrt{n!\,\Gamma(n+\frac{d}{2})}}\sum_{m_1+\dots +m_d=n}\,\prod_{i=1}^{d}\frac{((2m_i)!)^{\frac{1}{2}}}{((2m_i+1)(2m_i+2))^{\frac{1}{2}}}.
\label{S}
\end{align}
By \eqref{bound_H} and \eqref{gtb} we have that 
\begin{align}\label{M_bound}
\bigg|\int_{\R^d}\tilde{g}_M(x)\prod_{i=1}^{d}h_{2m_i+2}(x_i)dx\bigg|\les M^{d}.
\end{align}
Next, we claim that $\mathcal{S}_{d,n}$ defined in \eqref{S} satisfies the bound
\begin{align}\label{sum_prod}
|\mathcal{S}_{d,n}|\les {n^{-\frac{d+3}{4}}}.
\end{align}
To begin with, we recall that if there exists $k\in \N^{*}$ such that $\sum_{i=1}^{k}m_i=n$ where  $m_i\in \N^{*}$ for all $i\in \{1,2,\dots,k\}$, then 
\begin{align}\label{max_prod}
 \prod_{i=1}^{k}(2m_i)!\le (2(n-k+1))!.
\end{align}
Let  $n, k\in \N^{*},$ with $k\le d$. We define the set $A_{n,k}\subset \N^{d}$ as follows:  
\begin{align}
\label{defA}
A_{n,k}:=\left\{(m_1,\dots, m_d)\in \N^d: \sum_{i=1}^d m_i=n,  \# \left\{m_i\neq 0\right\}=k \right\},
\end{align}
where  by $\#A $ we denote the cardinality of the set $A$.
For the sake of notation, we also set $\overline{m}_d:=(m_1,\dots,m_d)$.

Note that, if $\overline{m}_d\in A_{k,n}$ there exists $j\in \{1,\dots,d\}$ such that $m_j\ge \frac{n}{k}$. In particular, 
\begin{align}
\label{up1}
\prod_{i=1}^{d}\frac{1}{((2m_i+1)(2m_i+2))^{\frac{1}{2}}}\les n^{-1}. 
\end{align}
Furthermore, 
\begin{align}
\label{223}
\#A_{n,k}\le  \binom{n+k-1}{n}\binom{d}{k}.
\end{align}
Thus, by \eqref{S}, \eqref{max_prod}, \eqref{defA},   \eqref{up1}, \eqref{223} and Stirling's formula for the Gamma function,  we have
\begin{equation}\label{count}
\begin{split}
|\mathcal{S}_{d,n}|&= \frac{1}{2^{n}\sqrt{n!\,\Gamma(n+\frac{d}{2})}}\sum_{k=1}^{d}\,\sum_{
\overline{m}_d\in A_{n,k}}\prod_{i=1}^{d}\frac{((2m_i)!)^{\frac{1}{2}}}{((2m_i+1)(2m_i+2))^{\frac{1}{2}}}\\
&\les \frac{1}{2^{n}\sqrt{n!\,\Gamma(n+\frac{d}{2})}}\sum_{k=1}^{d}\frac{((2(n-k+1))!)^{\frac{1}{2}}}{n}  \#A_{n,k}\\
& \les \frac{1}{2^{n}\sqrt{n!\, \Gamma(n+\frac{d}{2})}}\sum_{k=1}^{d}\frac{((2(n-k+1))!)^{\frac{1}{2}}}{n} \binom{n+k-1}{n}\\
& \les n^{-\frac{d+3}{4}}.
\end{split}
\end{equation}
 Finally, by combining \eqref{count},  \eqref{sum_prod},  \eqref{M_bound} with \eqref{I_d} we obtain \eqref{d2}.
\end{proof}
As a product of Lemma \ref{LEM:1}, we prove the following result which will be used in the proof of Theorem \ref{main} (ii).

\begin{corollary}\label{conv_norm}
Let $\beta>\beta^{*}$ where $\beta^*$ is defined in \eqref{beta} and $N\in \N$. Assume that  $M=M(N)\in \N$ is such that  $M^{\beta}\sim N$ for $N\gg 1$. Let  $f_M$ be as in \eqref{fM}.  Then, 
\begin{align*}
\lim_{N\to \infty}\|\P_{N}f_M-f_M\|_{\mathcal{H}^{1}(\R^d)}=0.
\end{align*}
The same conclusion holds when the $\mathcal{H}^{1}(\R^d)$ norm is replaced by any $L^p$ norm, where $p$ satisfies the assumptions of Proposition \ref{GNS_prop}. 
In particular, 
\begin{align}	
&\int_{\R^d}|\P_{N}f_M|^2dx\sim 1, \label{L2scaling1}\\
& \int_{\R^d} |\P_{N}f_M|^{p}dx \sim N^{\frac{1}{\beta}\big(\frac{dp}{2}-d\big)}, \label{Lpscaling1}\\
&  \int_{\R^d} |\mathcal{L}^{\frac{1}{2}}\P_{N}f_M|^{2}dx\les N^{\frac{2}{\beta}},
\label{H_11}
\end{align}
for any $p$ as in Proposition \ref{GNS_prop} and  $N\gg 1$.
\end{corollary}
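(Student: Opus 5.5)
The strategy is to control the tail $f_M-\P_N f_M=\sum_{n>N}\langle f_M,h_n\rangle h_n$ directly in $\mathcal{H}^1$ through the coefficient decay of Lemma \ref{LEM:1}. The $L^p$ statements will then follow by Sobolev-type embeddings, and the three asymptotics by comparison with Lemma \ref{asymptotics}.

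\textbf{Step 1 ($\mathcal{H}^1$ convergence).} By \eqref{H1_curl} and \eqref{ld_n},
\begin{align*}
\|\P_N f_M-f_M\|^2_{\mathcal{H}^1}=\sum_{n>N}\nu_n^2|\langle f_M,h_n\rangle|^2\les \sum_{n>N} n\,|\langle f_M,h_n\rangle|^2 .
\end{align*}
For $d=1$, \eqref{d1} bounds this by $M^5\sum_{n>N}n^{-2}\les M^5N^{-1}$. For $d\ge2$, \eqref{d2} gives $M^{3d}\sum_{n>N}n^{1-\frac{d+3}{2}}\les M^{3d}N^{\frac{1-d}{2}}$, where the sum converges because $\frac{d+1}{2}>1$. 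Substituting $N\sim M^\beta$, the exponents become $M^{5-\beta}$ and $M^{3d-\beta\frac{d-1}{2}}$.

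The second exponent is negative exactly when $\beta>\frac{6d}{d-1}$. This is stronger than the stated $\beta^*=\frac{3d}{d-1}$. The likely explanation is that the authors compare the norm itself, not its square: $M^{\frac{3d}{2}}N^{-\frac{d-1}{4}}$ tends to zero iff $\beta>\frac{6d}{d-1}$. The same factor appears in $d=1$, where the norm behaves like $M^{5/2}N^{-1/2}$ and so requires $\beta>5$, which matches $\beta^*=5$.

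I therefore expect the main obstacle to be this bookkeeping against $\beta^*$. Before using the stated threshold, I would check whether the factor $\nu_n^2$ can be lowered, for instance via a sharper bound in Lemma \ref{LEM:1} or by estimating in $\mathcal{H}^s$ with $s$ slightly less than one. If not, I would simply require $\beta$ large enough that the power of $M$ is negative. Any such $\beta$ suffices for the intended application, since $M$ may be chosen as a small power of $N$.

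\textbf{Step 2 ($L^p$ convergence).} For the $L^p$ claim with admissible $p$, apply \eqref{GNS} to $u=\P_N f_M-f_M$. Since $\|u\|_{L^2}\le\|u\|_{\mathcal{H}^1}$ (because $\nu_n\ge1$), the $L^p$ norm is controlled by a positive power of $\|u\|_{\mathcal{H}^1}$, which tends to zero by Step 1.

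\textbf{Step 3 (asymptotics).} \eqref{L2scaling1} follows from \eqref{L2scaling} and $\|\P_Nf_M-f_M\|_{L^2}\to0$. For \eqref{Lpscaling1}, use the triangle inequality in $L^p$: by \eqref{LPscaling}, $\|f_M\|_{L^p}\sim M^{\frac d2-\frac dp}$, which is $\gtrsim 1$ since $p>2$, while the error tends to zero. Hence $\|\P_Nf_M\|_{L^p}^p\sim M^{\frac{dp}{2}-d}\sim N^{\frac1\beta(\frac{dp}{2}-d)}$. For \eqref{H_11}, $\P_N$ is a contraction on $\mathcal{H}^1$, so by \eqref{H_1} with $s=1$ we get $\|\P_Nf_M\|^2_{\mathcal{H}^1}\le\|f_M\|^2_{\mathcal{H}^1}\les M^2\sim N^{2/\beta}$.
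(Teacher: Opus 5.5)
Your Step 1 is the paper's own argument, and your arithmetic is right where the paper's is not. From \eqref{d2}, the tail is $\les M^{3d}\sum_{n>N}n^{-\frac{d+1}{2}}\sim M^{3d}N^{-\frac{d-1}{2}}$. The paper's proof records this as $N^{\frac{3d}{\beta}-(d-1)}$, dropping the factor $\frac12$ in the power of $N$, and that slip is the only source of $\beta^*=\frac{3d}{d-1}$ for $d\ge2$. Your norm-versus-square explanation cannot account for it: taking a square root never changes whether a power of $M$ tends to zero, and your own computation gives $\frac{6d}{d-1}$ both ways.

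As written, then, neither your Step 1 nor the paper's proof gives the $\mathcal{H}^1$ convergence when $d\ge2$ and $\frac{3d}{d-1}<\beta\le\frac{6d}{d-1}$. Simply enlarging $\beta$ proves a weaker corollary than the one stated. That weaker version is harmless for Section 3, where $\beta$ is a free parameter and all the main competing terms carry the same factor $M^2\sim N^{2/\beta}$, but it is not the statement.

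The missing idea is to use more regularity of $f_M$, not less. An $\mathcal{H}^s$ estimate with $s<1$ cannot yield the claimed $\mathcal{H}^1$ convergence. Since \eqref{H_1} holds for every $s$, and $\nu_n^2\le\nu_{N+1}^{2-2s}\nu_n^{2s}$ for $n>N$ and $s\ge1$, you get
\begin{align*}
\|\P_Nf_M-f_M\|^2_{\mathcal{H}^1}\le\nu_{N+1}^{2-2s}\|f_M\|^2_{\mathcal{H}^s}\les N^{1-s}M^{2s}\sim M^{\beta-s(\beta-2)},
\end{align*}
which tends to zero once $s>\frac{\beta}{\beta-2}$. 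Taking $s=3$ gives $M^{6-2\beta}\to0$ for all $\beta>3$. This covers every $\beta>\beta^*$ in every dimension, since $\beta^*=5$ if $d=1$ and $\beta^*=\frac{3d}{d-1}>3$ if $d\ge2$, so Lemma \ref{LEM:1} is not needed at all.

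With Step 1 replaced by this, your Steps 2 and 3 go through unchanged. Bounding \eqref{H_11} via the $\mathcal{H}^1$-contractivity of $\P_N$ is slightly cleaner than the paper's route through the convergence.
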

\begin{proof}
We recall that, since $f_M$ is Schwartz (and radial for $d\ge 2$), we can write $f_M=\sum_{n=0}^{\infty}\langle f_M, h_n\rangle_{L^{2}(\R^d)} h_n$, and 
\begin{align}
\|f_M\|^2_{\mathcal{H}^1(\R^d)}= \sum_{n=0}^{\infty}\nu^{2}_n |\langle f_M, h_n\rangle_{L^{2}(\R^d)}|^{2},
\label{s11}
\end{align} 
where $\nu_n$ is defined by \eqref{ld_n}.
In particular,  by \eqref{proj}, \eqref{s11} and Lemma  \ref{LEM:1}
\begin{align}
\|\P_{N}f_M-f_M\|^2_{\mathcal{H}^{1}(\R)}& =\sum_{N+1}^{\infty}\nu^{2}_n |\langle f_M, h_n\rangle_{L^{2}(\R)}|^{2} \notag \\
& \les M^{5}\sum_{N+1}^{\infty} \frac{1}{n^2}\sim N^{\frac{5}{\beta}-1},
\label{h1bis}
\end{align}
which vanishes for large $N$ as long as $\beta>5$.
Similarly, if $d\ge 2$, 
\begin{align}
\label{h1}
\|\P_{N}f_M-f_M\|^2_{\mathcal{H}^{1}(\R^d)}& =\sum_{N+1}^{\infty}\nu^{2}_n |\langle f_M, h_n\rangle_{L^{2}(\R^d)}|^{2} \notag \\
& \les M^{3d}\sum_{N+1}^{\infty} {n^{-\frac{d+1}{2}}}\sim N^{\frac{3d}{\beta}-(d-1)},
\end{align}
which vanishes provided $\beta>\frac{3d}{d-1}$.  Thus, by \eqref{L2scaling}, \eqref{H_1} and  \eqref{h1} we derive \eqref{L2scaling1} and \eqref{H_11}. Furthermore, \eqref{GNS}, \eqref{h1bis} and  \eqref{h1} yield
\begin{align*}
\|\P_ {N}f_M-f_M\|^p_{L^p(\R^d)}\to 0\quad \text{as}\ N\to +\infty,
\end{align*}
proving (from \eqref{LPscaling}) \eqref{Lpscaling1}.
By combining the above analysis with Lemma \ref{asymptotics} we conclude the proof. 
\end{proof}

\subsection{Variational formulation}
In order to prove Theorem \ref{main}, we recall a
variational formula for the partition functional that has been of prominent use since the seminal work \cite{BG}
by Barashkov and Gubinelli;
see also \cite{OOT, OST, OOT2, TW}.
In this paper, we employ \cite[Lemma 2.1]{RSTY}.
Let us fix a probability space $(\Omega, \mathcal{F},\PP)$. Let $W(t)$ be  a cylindrical Brownian motion in $L^2(\R)$ (respectively $L^{2}_{\text{rad}}(\R^d))$ defined by
\begin{align*}
W(t)=\sum_{n= 0}^{\infty} B_n(t)h_n,
\end{align*}
\noi
where  
$\{B_n\}_{n \in \N}$ is a sequence of mutually independent complex-valued\footnote{By convention, we normalize $B_n$ such that $\text{Var}(B_n(t)) = t$. In particular, $B_0$ is  a standard real-valued Brownian motion.} Brownian motions and $\{h_n\}_{n\in \N}$ is the eigenbasis \eqref{EIGEN}.
Then, we define a centered Gaussian process $Y(t)$
by 
\begin{align}
Y(t)=  \mathcal{L}^{-\frac 12}W(t)=\sum_{n= 0}^{\infty}\frac{B_n(t)}{\nu_n}h_n,
\label{P2}
\end{align}
which is well defined in $L^{2}(\Omega;\mathcal{H}^{-s}(\R^d))$.
\noi
In the following, we use the shorthand notation:  $Y = Y(1)$.
Then, 
we have $\Law(Y) = \mu$, 
where $\mu$ is the  Gaussian free field defined in~\eqref{mu_def}.
Given $N \in \N$, 
we set   $Y_N = \P_{N} Y $
such that 
$\Law(Y_N) = (\P_{N})_*\mu$, 
i.e.~the pushforward of $\mu$ under 
the spectral projector $\P_{N}$ in~\eqref{proj}.

Next, let $\Ha$ denote the space of drifts, 
which are progressively measurable processes 
belonging to 
$L^2([0,1]; L^2(\R^d))$, $\PP$-almost surely. 
Then,  the  Bou\'e-Dupuis variational formula \cite{BD, Ust}
reads as follow; see \cite{Zhang} and \cite[Appendix A]{TW}
for infinite-dimensional versions.

\begin{lemma}\label{variational_f}
Let $Y(t)$ be as in \eqref{P2}. Suppose that  $F:C^\infty(\T^d) \to \R$
is measurable such that $\E\big[|F(Y_N)|^p\big] < \infty$
and $\E\big[|e^{F(Y_N)}|^q \big] < \infty$ for some $1 < p, q < \infty$ with $\frac 1p + \frac 1q = 1$.
Then, we have
\begin{align*}
\log \E\Big[e^{F(Y_N)}\Big]
= \sup_{\dr \in \mathbb H_a}\E\bigg[ F( Y_N + \P_{N}(I(\dr)(1)) - \frac{1}{2} \int_0^1 \| \dr(t) \|_{L^2_x}^2 dt \bigg], 
\end{align*}

\noi
where  
the expectation $\E = \E_\PP$
is taken  with respect to the underlying probability measure~$\PP$.
Here, $I(\dr)$ is defined by 
\begin{align}
I(\dr)(t)  =  \int_0^t \mathcal{L}^{-\frac 12} \dr(t') dt'.
\label{Dr1}
\end{align}
\end{lemma}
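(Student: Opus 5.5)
The plan is to reduce the statement to the classical finite-dimensional Bou\'e--Dupuis formula \cite{BD, Ust}. The key observation is that $F(Y_N)$ depends only on finitely many Brownian motions. By \eqref{P2}, $Y_N=\sum_{n=0}^{N}\nu_n^{-1}B_n(1)h_n$. Hence $F(Y_N)=G(B_0(1),\dots,B_N(1))$ for a measurable function $G$ on $\C^{N+1}\simeq\R^{2N+2}$, namely $G(z)=F\big(\sum_{n\le N}\nu_n^{-1}z_nh_n\big)$. In the same way, writing $\dr(t)=\sum_n\dr_n(t)h_n$, we get from \eqref{Dr1}
\begin{align*}
Y_N+\P_N I(\dr)(1)=\sum_{n=0}^{N}\frac{1}{\nu_n}\Big(B_n(1)+\int_0^1\dr_n(t)\,dt\Big)h_n ,
\end{align*}
since $\mathcal{L}^{-\frac12}$ acts diagonally and $\P_N$ simply cuts off modes $n>N$. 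The cost is $\int_0^1\|\dr(t)\|_{L^2_x}^2dt=\sum_{n\ge0}\int_0^1|\dr_n(t)|^2dt$. Therefore the right-hand side of the lemma equals
\begin{align*}
\sup_{\dr}\E\Big[G\big(B_{\le N}(1)+\textstyle\int_0^1\dr_{\le N}dt\big)-\tfrac12\int_0^1|\dr_{\le N}|^2dt\Big].
\end{align*}
The modes $n>N$ only add a nonnegative cost, so they can be set to zero without decreasing the functional. We are thus left with the finite-dimensional formula over drifts $v=\dr_{\le N}$ that are progressively measurable with respect to the (possibly larger) filtration generated by all the $B_n$.

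\textbf{Bounded $G$, upper bound.} For a drift in the class defined via Girsanov (bounded drift, or drift satisfying Novikov), the law $\mathbb{Q}$ of $B_{\le N}+\int_0^\cdot v$ satisfies $H(\mathbb{Q}\,|\,\PP)\le\frac12\E\int_0^1|v|^2dt$. The Donsker--Varadhan (Gibbs) variational principle $\log\E_\PP[e^{G}]=\sup_{\mathbb{Q}}\{\E_{\mathbb{Q}}[G]-H(\mathbb{Q}\,|\,\PP)\}$ then gives that every admissible drift produces a value at most $\log\E[e^{G(B(1))}]$. General drifts in $\Ha$ are handled by localization: stop $v$ when $\int_0^t|v|^2$ exceeds a level $L$, and let $L\to\infty$. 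This argument works for the enlarged filtration too, because only the relative-entropy inequality is used.

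\textbf{Bounded $G$, lower bound.} Here we use the F\"ollmer drift $v_t=\nabla_x\log P_{1-t}e^{G}(B_t)$, where $P_s$ is the heat semigroup. By It\^o's formula applied to $\log P_{1-t}e^G(X_t)$ along the shifted process $X$, this drift attains equality. If $G$ is merely bounded measurable, we first approximate it by smooth bounded functions and then pass to the limit.

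\textbf{Removing boundedness.} Truncate $G_M=\max(\min(G,M),-M)$ and let $M\to\infty$. On the left side, $\E[e^{G_M}]\to\E[e^G]$ by dominated convergence, using $e^{G_M}\le e^{|G|}\le e^G+e^{-G}$, the hypothesis $e^{F(Y_N)}\in L^q$, and $|G|\in L^p$. On the right side, one inequality is immediate: for a fixed drift $v$ with finite cost, $\E[G_M(B+\int v)]\to\E[G(B+\int v)]$. This requires integrability of $G$ under the shifted law, which follows from Girsanov together with H\"older's inequality with exponents $p,q$; this is exactly where the assumption $\frac1p+\frac1q=1$ enters. The reverse inequality uses the upper bound for bounded functions together with the relative-entropy control $\E_{\mathbb Q}[G_M]\le \log\E_{\PP}[e^{G_M}]+H(\mathbb Q\,|\,\PP)$, which passes to the limit uniformly in $\mathbb Q$.

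I expect this truncation step to be the main obstacle. The difficulty is that the drifts are not a priori bounded, so we must show that the supremum commutes with the limit $M\to\infty$. Concretely, we need that near-optimal drifts for $G$ can be taken with uniformly controlled cost $\E\int|v|^2$. This control comes from the entropy bound combined with $e^G\in L^q$. Alternatively, one can simply invoke the infinite-dimensional versions in \cite{Zhang} and \cite[Appendix A]{TW}, which cover precisely this integrability setting.
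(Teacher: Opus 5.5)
Your reduction to the finite-dimensional Bou\'e--Dupuis formula for $B_{\le N}=(B_0,\dots,B_N)$ is sound, and so is the bounded case. The Girsanov/relative-entropy upper bound does survive the enlarged filtration, and the F\"ollmer drift gives the lower bound. Note that the drift must be evaluated along the controlled process, $v_t=\nabla\log P_{1-t}e^{G}(X_t)$ with $dX_t=v_t\,dt+dB_{\le N}(t)$, not at $B_t$. This route is more self-contained than the paper's, which gives no proof and simply quotes \cite{BD, Ust, Zhang, TW} and \cite[Lemma 2.1]{RSTY}; your fallback of citing \cite{Zhang} and \cite[Appendix A]{TW} is exactly what the paper does.

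The gap is in removing boundedness, which is exactly where the $L^p$/$L^q$ hypothesis matters. Write $\mathbb G$ for the law of $B_{\le N}(1)$. The inequality $\sup\le\log\E[e^{G}]$ is harmless. You can apply the Gibbs inequality $\E_{\mathbb Q}[G]\le H(\mathbb Q\,|\,\mathbb G)+\log\E_{\mathbb G}[e^{G}]$ directly. Alternatively, let $M\to\infty$ by monotone convergence on $G_+$ and $G_-$ separately, noting that $\E_{\mathbb Q}[G_+]<\infty$ for finite-entropy $\mathbb Q$. Your Girsanov--H\"older justification does not work here for a general finite-energy drift, whose terminal law has finite entropy but need not have an $L^q(\mathbb G)$ density.

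For the reverse inequality $\sup\ge\log\E[e^{G}]$, however, you invoke $\E_{\mathbb Q}[G_M]\le\log\E[e^{G_M}]+H(\mathbb Q\,|\,\mathbb G)$. That is an upper bound and cannot bound a supremum from below. ``Uniformly controlled cost'' does not rescue it either. Transferring near-optimal drifts for $G_M$ to $G$ requires $\E[(G_- - M)_+(X^M_1)]\to0$, and entropy bounds only control functions with exponential moments, whereas $G_-$ is merely in $L^p(\mathbb G)$.

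What closes the argument is the \emph{exact} terminal law of the F\"ollmer drift. Let $v^M$ be the F\"ollmer drift of $G_M$ (exactly, or up to an arbitrarily small error via your smoothing). Then $X^M_1$ has law $\mathbb Q_M=\mathcal Z_M^{-1}e^{G_M}\mathbb G$ with $\mathcal Z_M=\E[e^{G_M}]$, hence
\begin{align*}
\E\big[G(X^M_1)\big]-\frac12\E\int_0^1|v^M_t|^2dt&=\log\mathcal Z_M+\int(G-G_M)\,d\mathbb Q_M,\\
\Big|\int(G-G_M)\,d\mathbb Q_M\Big|&\le\mathcal Z_M^{-1}\|G-G_M\|_{L^p(\mathbb G)}\|1+e^{G}\|_{L^q(\mathbb G)}\longrightarrow0.
\end{align*}
This uses $e^{G_M}\le 1+e^{G}$ and $\mathcal Z_M\to\E[e^G]>0$, and it is where $\frac1p+\frac1q=1$ enters. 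The same bound $e^{G_M}\le1+e^{G}$ is the correct dominating function on the left-hand side; $e^{-G}$ is not assumed integrable, so $e^{G}+e^{-G}$ does not work.

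Finally, your identification $\C^{N+1}\simeq\R^{2N+2}$ glosses over a normalization point. With $\mathrm{Var}(B_n(t))=t$ for complex $B_n$, the real and imaginary parts have variance $t/2$. The classical formula therefore produces the cost $\int_0^1|\dr_n(t)|^2dt$ per complex mode, not $\frac12\int_0^1|\dr_n(t)|^2dt$; test it on $G(z)=\Re(\bar c z_n)$. This slip sits in the statement itself and only changes constants in the paper's applications, but a careful execution of your reduction should record it.
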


In particular, from \cite[Lemma 2.2]{RSTY} we recall the following:
\begin{lemma}\label{lem13} Let  $\dr\in \mathbb{H}_a$ and $I(\dr)(1)$ be as in \eqref{Dr1}. Then, we have 
\begin{align*}
\| I(\dr)(1)\|^2_{\mathcal{H}^{1}(\R^d)}\le \int_{0}^{1}\|\dr(t)\|^2_{L_x^{2}}dt.
\end{align*}
\end{lemma}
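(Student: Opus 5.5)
This is a one-line consequence of the definition of $I(\dr)$ in \eqref{Dr1}, combined with the Minkowski and Cauchy--Schwarz inequalities in time. First, since $\mathcal{L}^{\frac12}$ is a closed operator, it commutes with the Bochner integral in time, which gives
\begin{align*}
\mathcal{L}^{\frac12} I(\dr)(1) = \int_0^1 \mathcal{L}^{\frac12}\mathcal{L}^{-\frac12}\dr(t)\,dt = \int_0^1 \dr(t)\,dt .
\end{align*}
One can also see this directly at the level of coefficients in the eigenbasis $\{h_n\}$: if $\dr(t)=\sum_n \dr_n(t)h_n$, then $I(\dr)(1)=\sum_n \nu_n^{-1}\big(\int_0^1\dr_n(t)\,dt\big)h_n$. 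By \eqref{H1_curl} this yields
\begin{align*}
\|I(\dr)(1)\|^2_{\mathcal{H}^1(\R^d)}=\sum_{n\ge0}\Big|\int_0^1\dr_n(t)\,dt\Big|^2 .
\end{align*}

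Next, I would use the coefficient formulation because it avoids any functional-analytic subtleties. For each $n$, Cauchy--Schwarz on $[0,1]$ gives
\begin{align*}
\Big|\int_0^1\dr_n(t)\,dt\Big|^2\le\int_0^1|\dr_n(t)|^2\,dt .
\end{align*}
Summing over $n$ and exchanging sum and integral by Tonelli, then using Parseval for the orthonormal basis $\{h_n\}$ of $L^2(\R)$ (respectively $L^2_{\text{rad}}(\R^d)$), gives
\begin{align*}
\sum_{n}\int_0^1|\dr_n(t)|^2\,dt=\int_0^1\|\dr(t)\|^2_{L^2_x}\,dt .
\end{align*}
This is exactly the claim.

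The only point requiring care is justification, not estimation. Since $\dr\in\Ha$, we have $\int_0^1\|\dr(t)\|_{L^2_x}^2\,dt<\infty$ almost surely, so each $\dr_n\in L^2([0,1])\subset L^1([0,1])$. Hence the coefficients $\int_0^1\dr_n\,dt$ are well defined, and the series defining $I(\dr)(1)$ converges in $\mathcal{H}^1$ by the inequality just proved. I would not expect any real obstacle here; the statement holds pathwise, for almost every $\o$.
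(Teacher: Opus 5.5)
Your proof is correct. The paper does not prove this lemma itself but imports it from \cite[Lemma 2.2]{RSTY}, where the argument is the one you mention first: write $\mathcal{L}^{\frac12}I(\dr)(1)=\int_0^1\dr(t)\,dt$, then apply Minkowski's integral inequality and Cauchy--Schwarz in $t$. Your mode-by-mode Cauchy--Schwarz followed by Tonelli and Parseval is an equivalent way of carrying out the same estimate.
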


In the sequel, Lemma \ref{variational_f} will be applied to  
\begin{align*}
F(Y_N)=\frac{\ld_N}{p^*}\|Y_N\|^{p^*}_{L^{p^*}(\R^d)}\cdot \ind_{\{|\int_{\R^d} \, : |Y_N|^2: \, dx| \, \leq K_N\}}.
\end{align*}

\subsection{Stochastic tools}
In this subsection, we state several useful lemmas from
stochastic analysis. We begin by stating  the Wiener chaos estimate
(\cite[Theorem~I.22]{Simon}).
To this aim, we first recall some basic definitions the reader can find in \cite{B}.

Let $(H, B, \mu)$ be an abstract Wiener
space, where $\mu$ is a Gaussian measure on a separable Banach space B with $H \subset B$ as its
Cameron-Martin space. Let $\left\{e_j\right\}_{j\in \N}$ be a complete orthonormal system of $H^{*} = H$ such that $\left\{e_j\right\}_{j\in \N}\subset B^*$.  We define a polynomial chaos of order $k$ of the form
\begin{align*}
\prod_{j=0}^{\infty}H_{k_j}(\langle x , e_j \rangle )
\end{align*}
where $x\in B$, $k_j \neq 0$ for only finitely many $j$’s with $k=\sum_{j=0}^{\infty}k_j $, $H_{k_j}$
is the Hermite polynomial
of degree $k_j$, and $\langle \cdot, \cdot \rangle$  denotes the $B$--$B^*$ duality pairing. We then define $\mathcal{H}_k$ as the closure
of polynomial chaoses of order $k$ under $L^{2}(B, \mu)$. The elements in $H_k$ are called homogeneous
Wiener chaoses of order $k$. We also write
\begin{align*}
\mathcal{H}_{\le k}=\bigoplus_{j=0}^{k}\mathcal{H}_j,
\end{align*}
$k\in \N$. Then, we recall the following Wiener Chaos estimate:
\begin{lemma}
\label{hyper}
Let $k\in \N$. Then, we have 
\begin{align*}
\|X\|_{L^{p}(\Omega)}\le (p-1)^{\frac{k}{2}}\|X\|_{L^{2}(\Omega)},
\end{align*}
for any $p\ge 2$ and $X\in \mathcal{H}_{\le k}$. 
\end{lemma}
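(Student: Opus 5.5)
The plan is to derive Lemma \ref{hyper} from Nelson's hypercontractivity of the Ornstein--Uhlenbeck semigroup, combined with the orthogonality of the chaos decomposition. Let $\{P_t\}_{t\ge0}$ be the Ornstein--Uhlenbeck semigroup on $L^2(B,\mu)$, i.e.\ Mehler's formula
\begin{align*}
P_tF(x)=\int_B F\big(e^{-t}x+\sqrt{1-e^{-2t}}\,y\big)\,d\mu(y).
\end{align*}
I will use two standard facts about it.

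\emph{Diagonal action.} $P_t$ acts diagonally on the chaos decomposition: $P_tX_j=e^{-jt}X_j$ for $X_j\in\mathcal H_j$. On polynomial chaoses $\prod_j H_{k_j}(\langle x,e_j\rangle)$ this follows from the one-dimensional identity for Hermite polynomials under the Mehler kernel. It then extends to all of $\mathcal H_j$ by continuity of $P_t$ on $L^2$.

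\emph{Nelson's theorem.} $\|P_tF\|_{L^p(\mu)}\le\|F\|_{L^2(\mu)}$ whenever $p\ge 2$ and $e^{2t}\ge p-1$.

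Given these two facts, the argument is short. Write $X=\sum_{j=0}^kX_j$ with $X_j\in\mathcal H_j$, and choose $t\ge0$ with $e^{2t}=p-1$. Set $Z:=\sum_{j=0}^k e^{jt}X_j\in\mathcal H_{\le k}$, so that $P_tZ=X$. Nelson's inequality and the $L^2$-orthogonality of distinct chaoses then give
\begin{align*}
\|X\|_{L^p}=\|P_tZ\|_{L^p}\le\|Z\|_{L^2}=\Big(\sum_{j=0}^k e^{2jt}\|X_j\|_{L^2}^2\Big)^{\frac12}\le e^{kt}\|X\|_{L^2}=(p-1)^{\frac k2}\|X\|_{L^2}.
\end{align*}
For complex-valued $X$ the same computation applies verbatim. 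Nelson's bound holds for complex $F$ because $P_t$ is positivity preserving, so $|P_tF|\le P_t|F|$ and one can apply the real bound to $|F|$. Hence there is no need to split into real and imaginary parts, which would cost a constant.

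The main obstacle is Nelson's theorem itself, and I would reduce it in three steps.
\begin{enumerate}
\item By density of polynomial chaoses in $\mathcal H_{\le k}$, together with Fatou's lemma for the $L^p$ norm, it suffices to treat $F$ depending on finitely many coordinates $\langle x,e_1\rangle,\dots,\langle x,e_n\rangle$. The problem is then on $(\R^n,\gamma_n)$ with the standard Gaussian measure.
\item There I would prove Gross's logarithmic Sobolev inequality
\begin{align*}
\int F^2\log F^2\,d\gamma_n-\|F\|_2^2\log\|F\|_2^2\le 2\int|\nabla F|^2\,d\gamma_n .
\end{align*}
In dimension one I would obtain it either from the two-point inequality on $\{\pm1\}$ and the central limit theorem, or via the Bakry--\'Emery argument $\frac{d}{ds}$ of the entropy along $P_s$. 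It tensorizes to $\R^n$ with a dimension-free constant by subadditivity of entropy.
\item Gross's equivalence theorem then upgrades the log-Sobolev inequality to hypercontractivity. Concretely, one differentiates $\log\|P_tF\|_{q(t)}$ with $q(t)=1+e^{2t}$ and shows it is nonincreasing. This yields $\|P_tF\|_{1+e^{2t}}\le\|F\|_2$, which suffices since $p\le 1+e^{2t}$ by the choice of $t$.
\end{enumerate}
Since the lemma is classical (\cite[Theorem~I.22]{Simon}), in the paper I would cite these steps rather than reproduce them.
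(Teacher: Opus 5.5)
Your proposal is correct and follows the route the paper indicates. The paper gives no argument beyond citing Simon's Theorem~I.22 and remarking that the estimate follows from hypercontractivity of the Ornstein--Uhlenbeck semigroup, and your reduction supplies exactly that step: Nelson's bound applied to $Z=\sum_{j\le k} e^{jt}X_j$ with $e^{2t}=p-1$, together with orthogonality of the chaoses (and your positivity remark handles complex $X$ without losing a constant); your sketch of Nelson's theorem via Gross's log-Sobolev inequality is the standard one and goes beyond what the paper provides.
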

The proof of Lemma \ref{hyper} follows from the hypercontractivity
of the Ornstein-Uhlenbeck semigroup.
In particular, the Wick power $:\!|Y_N|^2\!:$  are homogeneous
Wiener chaos of order 2.

Next, we recall a series of results corresponding to \cite[Corollary 2.4, Corollary 2.6, Lemma 2.7, Lemma 4.2]{RSTY}. In particular, Lemma \ref{small_mu} below is fundamental to prove convergence of the indicator $\ind_{\{|\int_{\R^d} \, : |\P_{N} u|^2: \, dx| \, \leq K_N\}}$ function which in turn implies the validity of \eqref{exp2}.
\begin{corollary}\label{Lp_conv}
Let  $p$ be as in Proposition \ref{GNS_prop}. Then, $Y \in L^p(\R^d)$. Moreover,  there exists $\gamma=\gamma(d,p)>0$ such that 
\begin{align*}
	\E\Big[\|Y-Y_N\|^p_{L^{p}(\R^d)}\Big]\les_p N^{-\gamma}.
\end{align*}
\end{corollary}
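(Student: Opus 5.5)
The plan is to derive the two claims of Corollary \ref{Lp_conv} from a pointwise variance bound, an $L^2_x$ estimate that is uniform in $x$, and Gaussian hypercontractivity (Lemma \ref{hyper}). We write $Y - Y_N = \sum_{n>N} \frac{B_n(1)}{\nu_n} h_n$. For each fixed $x$ this is a mean-zero complex Gaussian random variable with variance $\s_{>N}(x) := \sum_{n>N} \frac{h_n(x)^2}{\nu_n^2}$. Since $p\ge 2$ is admissible (and $p>2$ anyway), Fubini together with the Gaussian moment identity gives
\begin{align*}
\E\Big[\|Y-Y_N\|^p_{L^p(\R^d)}\Big] = C_p\int_{\R^d} \s_{>N}(x)^{\frac p2}\,dx .
\end{align*}
It therefore suffices to show $\int_{\R^d} \s_{>N}(x)^{p/2}dx \les N^{-\gamma}$. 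The statement $Y\in L^p$ a.s.\ follows from the same computation with $N$ replaced by $-1$, i.e.\ from finiteness of $\int \s(x)^{p/2}dx$, where $\s := \sum_{n\ge 0} h_n^2/\nu_n^2$.

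We interpolate between two bounds on $\s_{>N}$. The first is the $L^1$ bound
\begin{align*}
\int_{\R^d} \s_{>N}\,dx = \sum_{n>N}\nu_n^{-2} = \infty .
\end{align*}
This diverges like the harmonic series, so on its own it is useless. We therefore need an honest pointwise decay estimate for $\sum_{n>N} h_n(x)^2/\nu_n^2$. The approach we take is to use known $L^q$ bounds for Hermite functions (Koch--Tataru / Thangavelu type). In $d=1$,
\begin{align*}
\|h_n\|_{L^q(\R)}\les n^{-\frac{1}{2}\left(\frac12-\frac1q\right)} \quad \text{for } 4<q\le \infty ,
\end{align*}
with logarithmic or modified exponents for $q\le 4$. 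In the radial case $d\ge 2$ the analogous bound is $\|h_n\|_{L^q}\les n^{-\alpha(q)}$ for suitable $\alpha(q)>0$ in the range $2<q<\frac{2d}{d-2}$.

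Then, taking $q=p$, Minkowski's inequality in $L^{p/2}_x$ gives
\begin{align*}
\Big\|\s_{>N}\Big\|_{L^{p/2}} \le \sum_{n>N}\nu_n^{-2}\|h_n\|_{L^p}^2 \les \sum_{n>N} n^{-1-2\alpha(p)} \les N^{-2\alpha(p)} .
\end{align*}
Hence $\int \s_{>N}^{p/2} \les N^{-p\alpha(p)}$, and we take $\gamma = p\,\alpha(p)$. The same sum starting from $n=0$ is finite, which shows $\E\|Y\|_{L^p}^p<\infty$ and hence $Y\in L^p$ almost surely.

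The main obstacle is securing the decay exponent $\alpha(p)>0$ on the full range of Proposition \ref{GNS_prop}. Near $p=2$, and in $d=1$ for $p\le 4$, the Hermite $L^p$ norms decay only weakly. In the radial case near the Sobolev endpoint the radial Laguerre functions concentrate at the origin. If $\alpha(p)$ is unavailable at some $p$, we would instead interpolate: use Hölder in $x$ between $p_0$ close to $2$ (where $\int \s_{>N}$ restricted to a weighted space is controlled using the Gaussian-type decay of $h_n$ outside $|x|\gtrsim\sqrt n$) and $p_1$ in the good range. Any positive power $\gamma$ suffices, so a crude interpolation is enough.
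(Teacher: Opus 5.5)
Your argument is correct, and it is the standard proof of this statement; the paper does not prove it but imports it from \cite[Corollary 2.4]{RSTY}. As you do, Fubini with Gaussian moments and Minkowski in $L^{p/2}_x$ reduce everything to $\sum_{n>N}\nu_n^{-2}\|h_n\|_{L^p}^2$, and polynomial decay of $\|h_n\|_{L^p}$ (Koch--Tataru for $d=1$, Imekraz--Robert--Thomann in the radial case) gives the power gain.

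One correction: you have the one-dimensional regimes reversed. The bound $n^{-\frac12(\frac12-\frac1q)}$ holds for $2\le q<4$. For $q>4$ one only has $\|h_n\|_{L^q}\les n^{-\frac1{12}-\frac1{6q}}$, which is sharp (e.g.\ $\|h_n\|_{L^\infty}\sim n^{-1/12}$), so your $\gamma$ is wrong there. The decay exponent is nonetheless positive for every $q>2$, and in the radial case for every $2<q<\frac{2d}{d-2}$. The conclusion therefore stands, and neither the $L^1$ detour nor the fallback interpolation is needed.
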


\begin{corollary}\label{LEM:conv0}
Let  $r\ge 1$. Then, 
\begin{align*}
	\bigg|\!\bigg| \int_{\R^d}:\!|Y_N|^2\!:dx\bigg|\! \bigg|_{L^r(\Omega)}\les_r 1.
\end{align*}
Moreover,   $\int_{\R^d}:\!|Y_N|^2\!:dx$ is a Cauchy sequence in $L^{r}(\Omega)$ and, 
\begin{align*}
\bigg|\!\bigg| \int_{\R^d}:\!|Y_N|^2\!:dx-\int_{\R^d}:\!|Y|^2\!:dx\bigg|\! \bigg|_{L^r(\Omega)}\les_r N^{-\frac{1}{2}}.
\end{align*} 
\end{corollary}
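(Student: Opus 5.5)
The statement just before this point is Corollary~\ref{LEM:conv0}. The plan is to reduce everything to second moments by exploiting that $\int_{\R^d}:\!|Y_N|^2\!:dx$ lies in the second homogeneous Wiener chaos. Lemma~\ref{hyper} then converts any $L^r(\Omega)$ bound into an $L^2(\Omega)$ bound at the cost of a factor $(r-1)$. It therefore suffices to compute the $L^2(\Omega)$ norms of $\int :\!|Y_N|^2\!: dx$ and of its increments explicitly.

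\textbf{Step 1: diagonalization.} By orthonormality of $\{h_n\}$ and \eqref{P2},
\begin{align*}
\int_{\R^d}|Y_N|^2dx=\sum_{n=0}^N\frac{|B_n(1)|^2}{\nu_n^2},
\end{align*}
while $\int_{\R^d}\s_N\,dx=\sum_{n\le N}\nu_n^{-2}$ by \eqref{bup}. Hence
\begin{align*}
\int_{\R^d}:\!|Y_N|^2\!:dx=\sum_{n=0}^N\frac{|B_n(1)|^2-1}{\nu_n^2}.
\end{align*}
This is a sum of independent mean-zero random variables, each belonging to $\mathcal{H}_{\le 2}$; in fact each lies in $\mathcal{H}_2$, as a Hermite polynomial of degree $2$ in $\Re B_n$ and $\Im B_n$.

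\textbf{Step 2: second-moment bounds.} By independence, for $M>N$,
\begin{align*}
\E\Big[\Big|\int :\!|Y_M|^2\!:dx-\int :\!|Y_N|^2\!:dx\Big|^2\Big]=\sum_{n=N+1}^M\frac{\mathrm{Var}(|B_n(1)|^2)}{\nu_n^4}\les\sum_{n>N}\frac{1}{n^2}\les N^{-1},
\end{align*}
using $\nu_n^2\sim n$ from \eqref{ld_n} and $\mathrm{Var}(|B_n(1)|^2)=1$. Taking $N=0$ gives the uniform bound $\sum_n\nu_n^{-4}<\infty$.

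\textbf{Step 3: passage to $L^r$.} Lemma~\ref{hyper} with $k=2$ upgrades both estimates to $L^r(\Omega)$ for $r\ge2$; the range $1\le r<2$ follows from H\"older's inequality. This gives the uniform bound, and shows that the sequence is Cauchy in $L^r(\Omega)$ at rate $N^{-1/2}$. Call the limit $\int_{\R^d}:\!|Y|^2\!:dx$, consistent with \eqref{wick}. Letting $M\to\infty$ in the Cauchy estimate, which is legitimate since the bound is uniform in $M$, yields the stated rate.

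The only delicate point is making sure the $L^r$ limit coincides with the definition \eqref{wick}. Given the diagonal structure this is immediate, since both are limits of the same sequence. Almost sure convergence, needed later, follows from the Kolmogorov two-series theorem, or from Borel--Cantelli along dyadic $N$ using the $N^{-1/2}$ rate in high $L^r$.
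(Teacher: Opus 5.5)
Your proof is correct and takes essentially the same route as the paper. The paper does not reprove this corollary; it quotes it from \cite[Corollary 2.6]{RSTY}, whose argument (recalled in Section 4 of the paper) is exactly your diagonalization $\int_{\R^d}:\!|Y_N|^2\!:dx=\sum_{n\le N}\nu_n^{-2}(|g_n|^2-1)$, followed by the orthogonality of the centered variables $|g_n|^2-1$, giving the second-moment bound $\sum_{n>N}\nu_n^{-4}\les N^{-1}$, and then the Wiener chaos estimate (Lemma~\ref{hyper}) to pass to $L^r(\Omega)$.

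The exact variance constant plays no role: you get $1$ for standard complex Gaussians, while the paper writes $2$.
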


\begin{lemma}
\label{small_mu}
Let  $\mu$ as in \eqref{mu_def}.  Then, for every $K\in \R$ and $\eps>0$,
\begin{equation*}
\mu\bigg(\int_{\R^d}:\!|\P_{N}u|^2\!:dx \in [K-\eps,K+\eps]\bigg)\les \eps,
\end{equation*}
where the implicit constant is independent of $K$ and $\eps$. 
In particular, the same holds true for $\int_{\R^d}:\!|u|^2\!:dx=\lim_{N\to +\infty}\int_{\R^d}:\!|\P_{N}u|^2\!: dx$.
\end{lemma}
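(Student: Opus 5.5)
The plan is to use the explicit spectral representation of the Wick renormalized mass and exploit the independence of the zeroth mode, which has a bounded density. Under $\mu$, i.e.\ writing $u=u^\o$ as in \eqref{map}, orthonormality of $\{h_n\}$ and $\int_{\R^d}\s_N\,dx=\sum_{n\le N}\nu_n^{-2}$ (see \eqref{bup}) give
\begin{align*}
\int_{\R^d}:\!|\P_{N}u|^2\!:dx=\sum_{n=0}^{N}\frac{|g_n|^2-1}{\nu_n^2}
=\frac{|g_0|^2}{\nu_0^2}+Z_N,\qquad Z_N:=-\frac{1}{\nu_0^2}+\sum_{n=1}^{N}\frac{|g_n|^2-1}{\nu_n^2}.
\end{align*}
The key structural fact is that $Z_N$ is a function of $\{g_n\}_{n\ge1}$ only, hence independent of $g_0$.

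Since $g_0$ is a standard complex Gaussian with $\E|g_0|^2=1$, the variable $|g_0|^2$ is exponentially distributed with parameter $1$. Its density $e^{-s}\ind_{s\ge0}$ is therefore bounded by $1$. Conditioning on $Z_N$ (Fubini, using independence), for every $K\in\R$ and $\eps>0$ I would estimate
\begin{align*}
\mu\bigg(\int_{\R^d}:\!|\P_{N}u|^2\!:dx\in[K-\eps,K+\eps]\bigg)
=\E\Big[\PP\big(|g_0|^2\in \nu_0^2[K-Z_N-\eps,K-Z_N+\eps]\,\big|\,Z_N\big)\Big]\le 2\nu_0^2\,\eps .
\end{align*}
Here $\nu_0^2=1$ if $d=1$ and $\nu_0^2=d$ otherwise, by \eqref{ld_n}. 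This bound is uniform in $N$, $K$ and $\eps$, as claimed.

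For the limiting object, Corollary \ref{LEM:conv0} (equivalently, the $L^2$ convergence of the series, since the summands are independent and centered with variances $\nu_n^{-4}$, which are summable) shows that $Z_N\to Z_\infty:=-\nu_0^{-2}+\sum_{n\ge1}(|g_n|^2-1)\nu_n^{-2}$ in $L^2$. Moreover, $Z_\infty$ is still measurable with respect to $\{g_n\}_{n\ge1}$ and so independent of $g_0$. We have $\int_{\R^d}:\!|u|^2\!:dx=|g_0|^2\nu_0^{-2}+Z_\infty$, and the same conditioning argument applies verbatim. Alternatively, one can pass to the limit using convergence in probability and the closed interval, by enlarging $\eps$ slightly.

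There is no real obstacle. The only points needing care are checking that the Wick constant $\s_N$ integrates exactly to $\sum_{n\le N}\nu_n^{-2}$, so that the decomposition is exact, and checking that the limit defining $\int_{\R^d}:\!|u|^2\!:dx$ preserves the independent splitting of the zeroth mode.
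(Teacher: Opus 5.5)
Your proof is correct. The paper does not prove this lemma itself; it quotes it from \cite[Lemma 2.7]{RSTY}, so there is no in-paper argument to compare against. Your route is the standard way to get such anti-concentration bounds: write $\int_{\R^d}:\!|\P_N u|^2\!:dx=\sum_{n\le N}\nu_n^{-2}(|g_n|^2-1)$, isolate the zeroth mode $|g_0|^2\sim\mathrm{Exp}(1)$ (density bounded by $1$), and condition on the independent remainder. It gives the explicit constant $2\nu_0^2$, uniform in $N$, $K$ and $\eps$. That uniformity in $N$ is what is actually needed when the lemma is used to pass to the limit in the indicator of the renormalized $L^2$ cut-off.

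Your handling of the limit is also fine. Either use the independence of $Z_\infty$ from $g_0$, or use convergence in probability: bound by $2\nu_0^2(\eps+\delta)+\PP(|X_N-X|>\delta)$, let $N\to\infty$, then let $\delta\to0$.

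One point of robustness: the argument uses that $g_0$ is complex-valued, as in \eqref{map}. If the zeroth mode were real, as the footnote on $B_0$ in the variational subsection suggests, then $|g_0|^2$ would be $\chi^2_1$-distributed, with unbounded density near $0$, and would only give an $O(\sqrt{\eps})$ bound. In that case you would isolate the mode $n=1$ instead, which is complex, at the cost of the constant $2\nu_1^2$.
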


\begin{lemma}\label{lem13bis}
Given $N\ge M\gg 1$, define $Z_M$ by its coefficients in the eigenfunction expansion of $\L$ as follows. For $0\le n\le M$, $\widetilde{Z}_M(n,t)$ is a solution of the following differential equation:
\begin{align*}
\begin{cases} d\widetilde{Z}_M(n,t)=\nu^{-1}_n\sqrt{M}(\widetilde{Y}_N(n,t)-\widetilde{Z}_M(n,t))dt \\ {\widetilde{Z}}_{M}|_{t=0}=0,
\end{cases}
\end{align*}
where 
\begin{align*}
\widetilde{Y}_N(n,t)=\int_{\R^d}Y_N(t,x)h_n(x)dx, 
\end{align*}
and we set ${\widetilde{Z}}_{M}\equiv 0$ for $n>M$. Then, 
\begin{align}
Z_M(t,x):=\sum_{n=0}^{M}{\widetilde{Z}}_{M}(n,t)h_n(x)
\label{Z_def}
\end{align}
is a centered Gaussian process in $L^{2}(\R^d)$, which satisfies $\P_{M}Z_M=Z_M$. Moreover, we have 
\begin{align}
&\E\big[\|Z_M\|^2_{L^{2}(\R^d)}\big]\sim \log M, \notag\\
&\E\bigg[2\textup{Re}\int_{\R^d}Y_N \overline{Z_M}dx-\int_{\R^d}|Z_M|^2dx\bigg]\sim \log M, \label{alpha_eq}\\
&\E\bigg[\Big|\!\!:\!\!\|Y_N-Z_M\|^2_{L^{2}(\R^d)}\!\!:\!\!\Big|^2 \bigg]\les M^{-1}\log M, \notag\\
&\E\bigg[\Big|\!\int_{\R^d}Y_N f_Mdx\Big|^2 \bigg]+\E\bigg[\Big|\!\int_{\R^d}Z_Mf_M dx\Big|^2 \bigg]\les M^{-\frac{3}{2}},\notag\\
& \E\bigg[\int_{0}^{1}\left\|\frac{d}{ds} Z_M(s)\right\|^2_{\mathcal{H}^1(\R^d)}ds\bigg]\les M, \notag
\end{align}
for any $N\ge M\gg 1$, where $Z_M=Z_M(1)$ and 
\begin{align*}
\!\!:\!\!\|Y_N-Z_M\|^2_{L^{2}(\R^d)}\!\!:=\|Y_N-Z_M\|^2_{L^{2}(\R^d)}-\E\big[\|Y_N-Z_M\|^2_{L^{2}(\R^d)}\big].
\end{align*}
\end{lemma}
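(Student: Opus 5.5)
We prove the lemma by solving the defining linear ODE explicitly, one mode at a time, and then doing a direct variance computation for each quantity. Write $a_n:=\nu_n^{-1}\sqrt{M}$. By \eqref{P2}, for $n\le M\le N$ we have $\widetilde{Y}_N(n,t)=\nu_n^{-1}B_n(t)$. Duhamel's formula then gives
\begin{align*}
\widetilde{Z}_M(n,t)=\int_0^t a_n e^{-a_n(t-s)}\,\nu_n^{-1}B_n(s)\,ds .
\end{align*}
So $\widetilde{Z}_M(n,t)$ is a linear functional of $B_n$, hence centered Gaussian. Distinct modes are independent, and $\P_{M}Z_M=Z_M$ holds by construction. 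Integrating by parts,
\begin{align*}
\widetilde{Y}_N(n,1)-\widetilde{Z}_M(n,1)=\nu_n^{-1}\int_0^1 e^{-a_n(1-s)}\,dB_n(s),
\end{align*}
so $\E|\widetilde{Y}_N(n,1)-\widetilde{Z}_M(n,1)|^2=\nu_n^{-2}\frac{1-e^{-2a_n}}{2a_n}\sim \nu_n^{-2}\min(1,a_n^{-1})$. Since $\nu_n^2\les n\le M$ for $n\le M$, we have $a_n\gtrsim 1$, so this is $\sim \nu_n^{-1}M^{-1/2}$. Also, the kernel $a_ne^{-a_n(t-s)}$ has total mass $\le 1$, so $\E|\widetilde{Z}_M(n,1)|^2\le \nu_n^{-2}$.

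The bounds now follow mode by mode, using $\nu_n^2\sim n$ from \eqref{ld_n}.
\begin{itemize}
\item[(1)] \emph{Mass of $Z_M$.} We have $\E\|Z_M\|_{L^2}^2=\sum_{n\le M}\E|\widetilde Z_M(n,1)|^2$. The upper bound $\les\sum_{n\le M}\nu_n^{-2}\sim\log M$ is immediate. For the lower bound, $\E|\widetilde Z_M(n,1)|^2\ge \frac12\nu_n^{-2}$ once $a_n\gg1$, i.e. for $n\ll M$, which still gives $\gtrsim\log M$.
\item[(2)] \emph{Cross term \eqref{alpha_eq}.} Modewise, $2\Re(\widetilde Y\overline{\widetilde Z})-|\widetilde Z|^2=|\widetilde Y|^2-|\widetilde Y-\widetilde Z|^2$. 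Summing over $n\le M$ gives
\begin{align*}
\sum_{n\le M}\nu_n^{-2}-O\Big(M^{-1/2}\sum_{n\le M}n^{-1/2}\Big)=\log M-O(1)\sim\log M .
\end{align*}
\item[(3)] \emph{Wick-renormalized mass of $Y_N-Z_M$.} By independence of modes and Gaussianity, the variance of $:\!\|Y_N-Z_M\|_{L^2}^2\!:$ is $\sum_n (\E|\widetilde Y_N(n)-\widetilde Z_M(n)|^2)^2$. This is bounded by $\sum_{n\le M}n^{-1}M^{-1}+\sum_{M<n\le N}n^{-2}\les M^{-1}\log M$.
\item[(4)] \emph{Pairing with $f_M$.} We get $\E|\int Y_Nf_M|^2\le\sum_n\nu_n^{-2}|\langle f_M,h_n\rangle|^2=\|f_M\|_{\mathcal H^{-1}}^2\les M^{-2}$ by \eqref{H_1} with $s=-1$. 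The same bound holds for $Z_M$ because its modewise variances are dominated by $\nu_n^{-2}$. This gives a better rate than the claimed $M^{-3/2}$.
\item[(5)] \emph{Time derivative.} We have $\frac{d}{ds}\widetilde Z_M=a_n(\widetilde Y-\widetilde Z)$. The same Itô-integral computation at time $s$ gives $\E|\widetilde Y(n,s)-\widetilde Z(n,s)|^2\les \nu_n^{-2}a_n^{-1}$ uniformly in $s$. Hence
\begin{align*}
\E\int_0^1\Big\|\frac{d}{ds}Z_M\Big\|_{\mathcal H^1}^2ds\les\sum_{n\le M}\nu_n^2a_n^2\nu_n^{-2}a_n^{-1}=\sqrt M\sum_{n\le M}\nu_n^{-1}\sim M .
\end{align*}
\end{itemize}

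The main delicate point is the two-sided estimate in items (1) and (2). For these we need a lower bound, which requires using that $a_n\to\infty$ for $n\ll M$ and controlling the modes $n\sim M$, where $a_n\sim1$, by an $O(1)$ error. Everything else reduces to upper bounds on sums of explicit variances. In $d\ge2$ the same sums apply because $\nu_n^2=d+4n$, so only constants change.
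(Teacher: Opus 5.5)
\paragraph{Verdict.} Your proof is correct. It does not follow the paper's proof, because the paper has none: the lemma is quoted verbatim from \cite[Lemma 4.2]{RSTY}, so your argument is a self-contained replacement for that citation.

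\paragraph{Your argument.} You solve the linear ODE by Duhamel and rewrite
$\widetilde Y_N(n,t)-\widetilde Z_M(n,t)=\nu_n^{-1}\int_0^t e^{-a_n(t-s)}\,dB_n(s)$
as an Ornstein--Uhlenbeck-type Wiener integral. You then sum explicit modewise variances, using:
\begin{itemize}
\item $a_n\gtrsim 1$ for $n\le M$;
\item independence of the modes;
\item $\textup{Var}(|G|^2)=(\E|G|^2)^2$ for circular complex Gaussians, which applies since your kernel is real (the real-valued $B_0$ only costs a factor $2$).
\end{itemize}
This yields all five bounds. I checked each sum. The two-sided estimates also hold: $\E|\widetilde Z_M(n,1)|^2\ge\frac12\nu_n^{-2}$ once $a_n$ exceeds a fixed constant, i.e.\ for $n\le cM$. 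The error term in \eqref{alpha_eq} is $O\big(M^{-1/2}\sum_{n\le M}\nu_n^{-1}\big)=O(1)$.

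\paragraph{What your route buys.} Compared with the bare citation, it gains transparency. It shows the constants depend on $d$ only through $\nu_n^2=d+4n$. In the $f_M$-pairing it gives the stronger rate $M^{-2}$ in place of $M^{-3/2}$. That step is legitimate: $\|\mathcal{L}^{-1/2}f_M\|_{L^2}^2\les M^{-2}$ is \eqref{H_1} with $s=-1$, which the paper itself uses in \eqref{term2}. It can also be checked directly from $\mathcal{L}\ge-\Delta$ and the fact that $\widehat{f_M}$ is supported in $\{|\xi|>M/2\}$. The citation gains only brevity.
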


\section{Strongly coupling regime: non-normalizability}
In this section, we prove Theorem \ref{main}--(ii). Namely, we prove the divergence \eqref{Gibbs3} under the assumption \eqref{K2}.  The argument is closely related to the one of \cite[Section 4]{RSTY} and \cite[Section 4]{GOTT} which is inspired by \cite{TW, OOT, OOT2}. 

Let $p^*=2+\frac{4}{d}$. To begin with, note that  
\begin{align*}
\E_{\mu}&\Big[\exp(\ld_N R_{p^*\!,N}(u)\cdot \ind_{\left\{|\int_{\R^d}:\,|\P_{N}u(x)|^2\,:dx|\le K_N\right\}}\Big]\notag\\
& \ge \E_{\mu}\Big[\exp\Big(\ld_N R_{p^*\!,N}(u)\cdot \ind_{\left\{|\int_{\R^d}:\,|\P_{N}u(x)|^2 \, :dx|\le K_N\right\}}\Big)\Big]-1.
\end{align*}
Hence, the divergence \eqref{Gibbs3} follows once we prove that 
\begin{align*}
\lim_{N\to +\infty}\E_{\mu}&\Big[\exp\Big(\ld_N R_{p^*\!,N}(u)\cdot \ind_{\left\{|\int_{\R^d}: \, |\P_{N}u(x)|^2 \, :dx|\le K_N\right\}}\Big)\Big]=\infty. 
\end{align*}
For the convenience of the reader, in the sequel, we set 
\begin{align}
\label{trunc_t}
\Dr=I(\dr)(1),\quad	\Dr_N=\P_{N} I(\dr)(1).
\end{align}
To this aim, by Lemma \ref{variational_f} and \eqref{P2} we infer 
\begin{align}
\log 	\E_{\mu}\Big[&\exp\Big(\ld_N R_{p^*\!,N}(u)\cdot \ind_{\left\{|\int_{\R^d}:\,|\P_{N}u(x)|^2: \,  dx|\le K_N\right\}}\Big)\Big]\notag \\
&=\sup_{\theta\in \mathbb{H}_a}\E\Big[\ld_N R_{p^*\!,N}(Y+\Theta)\cdot \ind_{\left\{|\int_{\R^d}(:\,|Y_N|^2\,:+2\text{Re}(Y_N \overline{\Theta_N})+|\Theta_N|^2) dx|\le K_N\right\}} \notag \\
& \hphantom{XX} -\frac{1}{2}\int_{0}^{1}\|\theta(t)\|^2_{L^2}dx\Big] \label{var_formula}.
\end{align}

\noi
 From now on, we follow the main main idea of \cite{OST}  to construct a sequence of drifts $\dr\in \Ha$ such that $\Dr_N$ looks
like ``$-Y_N$+ a perturbation”, where the perturbation terms are approximations of a sequence
of blow-up profiles, which are bounded in $L^2$ but have large $L^p$-norm for $p > 2$. On the other hand, as explained in \cite{GOTT}, the lack of temporal regularity of the Brownian motion prevents us to directly use $Y_N$. Then, Lemma \ref{lem13bis} (\cite[Lemma 4.2]{RSTY}) provides a suitable approximation for $Y_N$ which is sufficient for our purpose.
By following the strategy employed in \cite{GOTT}, we split the proof in two cases depending on the behavior of the sequence $\left\{K_N\right\}_{N\in\N}$. In particular, in the first case, we use the same drift as in \cite{RSTY}; see \eqref{drift}.
In the second case, a deterministic drift is sufficient to prove the divergence of the partition function \eqref{Gibbs3}, see \eqref{drift_1}.  Note also that, the divergence will be essentially a consequence of the blow up of the quantity $\ld_N\|\P_{N}f_M\|^{p^*}_{L^{p^*}}-M^2 \log M$ provided $M$ is sufficiently small compared to $N$, see Corollary \ref{conv_norm}. 
Namely, for the rest of the proof, we fix $\beta>\beta^*$ and   $M=M(N)$ such that $M^\beta\sim N$, with  $N\gg 1$ and $\beta^*$ as in \eqref{beta}.

\vspace{0.2cm}
\noi
$\bullet$ {\bf Case 1:}
$\sup_{N\in \N}(\log N)^{-1}K_N<\infty. $

In this case, from \eqref{K2} we have 
\begin{align}
\ld_N\ge C \ld^*(\log N)^{-\frac{2}{d}}.
\label{lb1}
\end{align}
Next, we define a drift $\theta^0$ of the form

\begin{align}\label{drift}
\theta^{0}(t):=\mathcal{L}^{\frac{1}{2}}\bigg(\frac{d}{dt}Z_M(t)+\sqrt{\alpha_{M,N}}f_M\bigg),
\end{align}

\noi 
where $f_M$ is defined in \eqref{fM},  $Z_M$ is defined in \eqref{Z_def} while $\alpha_{M,N}$ by 
\begin{align}
\alpha_{M,N}:=\frac{\E\bigg[2 \text{Re}\int_{\R^d} Y_N \overline{Z_M}dx-\int_{\R^d}|Z_M|^{2}dx\bigg]}{\int_{\R^d}|\P_{N}f_M|^2 dx}\sim \log N, 
\label{amn}
\end{align}   
where we also used \eqref{L2scaling1} and \eqref{alpha_eq}. 
\noi
 Also, by \eqref{drift}, \eqref{alpha_eq} and \eqref{H_1} we infer 
\begin{align}
\int_{0}^{1}\E\big[\|\theta^{0}(t)\|^2_{L^2(\R^d)}\big]dt\les N^{\frac{2}{\beta}}\log N.
\label{eq15l}
\end{align}

\noi
In what follows, for the convenience of the reader, we set \begin{align*}
\Theta^{0}_N:=\P_{N}\Theta^{0}_N=-Z_M+\sqrt{\alpha_{M,N}}\, \P_{N}f_M.
\end{align*}

\noi 
Next, similarly to \cite[Lemma 4.4]{RSTY}, (and recalling that $M=M(N))$ we can prove that there exists $N_0=N_0(K_1)\in \N$ such that  
\begin{align}\label{ineq1}
\PP \Bigg (\Big|\int_{\R^d}(:\!|Y_N|^2\!:+2\text{Re}(Y_N \overline{\Theta^{0}_N})+|\Theta^{0}_N|^2) dx\Big |\le K_N \Bigg)\ge \frac{1}{2}
\end{align}
uniformly, for all $N\ge N_0(K_1)$. Moreover, by the mean value theorem and Young's inequality we have
\begin{align}
\begin{split}
&|R_{p^*\!,N}(Y+\Theta^{0})-R_{p^*\!,N}(\sqrt{\alpha_{M,N}}f_M)|\\
 &\phantom{XXXXX} \le \eps R_{p^*\!,N}(\sqrt{\alpha_{M,N}}\, \P_{N}f_M)+C_\eps R_{p^*\!,N}(Y-Z_M),
 \end{split}
 \label{ineq2}
\end{align}
and from \cite[eq. (4.33)]{RSTY}, we have
\begin{align}\label{ineq3}
\E\big[R_{p^*\!,N}(Y-Z_M)\big]\les 1. 
\end{align}

\noi 
Then, by combining  \eqref{var_formula}, \eqref{Lpscaling1}, \eqref{amn}, \eqref{ineq1}, \eqref{ineq2}, \eqref{ineq3}, \eqref{eq15l} with  \eqref{lb1} there exist $C_1, C_2, C_3>0$ such that 

\begin{align}
&\log 	\E_{\mu} \Big[\exp\big(\ld_N R_{p^*\!,N}(u)\cdot \ind_{\left\{|\int_{\R^d} \, :|\P_{N}u(x)|^2:  \,  dx|\le K_N\right\}}\big)\Big] \notag \\
&\  \ge \E\Big[\ld_N R_{p^*\!,N}(Y+\Theta^{0})\cdot \ind_{\left\{|\int_{\R^d}(\, :|Y_N|^2:+2\text{Re}(Y_N \overline{\Theta^{0}_N})+|\Theta^{0}_N|^2) dx|\le K_N\right\}}\notag\\
& \hphantom{XX} -\frac{1}{2}\int_{0}^{1}\|\theta^{0}(t)\|^2_{L^2}dx\Big] \notag\\
&\  \ge {(1-\eps)}\ld_N R_{p^*\!,N}(\sqrt{\alpha_{M,N}}f_M)	\PP \Bigg (\Big|\int_{\R^d}(:\!|Y_N|^2\!:+2\text{Re}(Y_N \overline{\Theta^{0}_N})+|\Theta^{0}_N|^2) dx\Big |\le K_N \Bigg) \notag\\
& \hphantom{XX}-C_\eps \ld_N\E[R_{p^*\!,N}(Y-Z_M)]-\frac{1}{2}\int_{0}^{1}\E\big[\|\theta^{0}(t)\|^2_{L^2(\R^d)}\big]dt \notag\\
&\ \ge  C_1  \ld_N N^{\frac{2}{\beta}}(\log N)^{1+\frac{2}{d}}-C_2 N^{\frac{2}{\beta}}\log N-C_3\ld_N \notag  \\
&\  \ge N^{\frac{2}{\beta}}\log N (\overline{C_1} \ld^*-C_2)-C_3\ld_N\to \infty \notag ,
\end{align}
provided $\ld_*$ is sufficiently large.

\vspace{0.2cm}
\noi
$\bullet$ {\bf Case 2:} $\sup_{N\in \N}(\log N)^{-1}K_N=\infty$.

 In this case, from \eqref{K2} we obtain that 
\begin{align}\label{lower_K_big}
\ld_N\ge C\ld^* K^{-\frac{2}{d}}_N.
\end{align}

\noi
Let us define a drift $\theta^0$ of the form
\begin{align}\label{drift_1}
	\theta_\gamma(t):=\sqrt{\gamma K_N}\,\mathcal{L}^{\frac{1}{2}}f_M,
\end{align}
where $\gamma>0$ is a small constant to be chosen later. 
Then, we define $\Dr_\gamma$ as
\begin{align}
\Dr_\gamma:=I(\theta_\gamma)(1)=\sqrt{\gamma K_N}f_M.
\label{DR}
\end{align}
For the convenience of te reader, we set 
\begin{align}\label{drift_gamma}
\Dr_{N,\gamma}:=\P_{N}\Dr_\gamma.
\end{align}
Furthermore, by \eqref{drift_1} and \eqref{H_1}
\begin{align}\label{drift_K}
 \|\Dr_\gamma\|^2_{\mathcal{H}^{1}}\le \int_{0}^{1}\|\dr_\gamma(t)\|^2_{L^{2}_x}dt\les \gamma K_N M^{2}.
\end{align}
\noi
In particular, by \eqref{Wick0}, \eqref{DR}, \eqref{drift_gamma} and \eqref{Lpscaling1}  we have that 
\begin{align}
\ld_N R_{p^*\!,N}(\Dr_{\gamma})&\sim \ld_N (\gamma K_N)^{1+\frac{2}{d}}\|\P_{N} f_M\|^{p^*}_{L^{p^*}}\sim \ld_N K^{1+\frac{2}{d}}_N N^{\frac{2}{\beta}}.
\label{PK_drift}
\end{align}
Next, we claim that, by choosing $\gamma$ small enough, we have
\begin{align}\label{claim_prob}
\PP \Bigg (\Big|\int_{\R^d}(:\!|Y_N|^2\!:+2\text{Re}(Y_N \overline{\Dr_{N,\gamma}})+|\Dr_{N,\gamma}|^2) dx\Big |\le K_N \Bigg)\ge \frac{1}{2},
\end{align}
for any $N$ large.
As a result, by  \eqref{Wick0},\eqref{drift_K}, \eqref{PK_drift}, \eqref{claim_prob}, \eqref{lower_K_big}, Corollary \ref{Lp_conv} and the fact that 
\begin{align*}
|R_{p^*\!,N}(Y+\Dr_{\gamma})-R_{p^*\!,N}(\Dr_{\gamma})|\le \eps R_{p^*\!,N}(\Dr_{\gamma})+C_\eps R_{p^*\!,N}(Y), 
\end{align*}
we conclude that 
\begin{align}
\log 	\E_{\mu} &\Big[\exp\Big(\ld_N R_{p^*\!,N}(u)\cdot \ind_{\left\{|\int_{\R^d} \, :|\P_{N}u(x)|^2: \, dx|\le K_N\right\}}\Big)\Big]\notag \notag \\
& \ge \E\Big[\ld_N R_{p^*\!,N}(Y+\Dr_{\gamma})\cdot \ind_{\left\{|\int_{\R^d}(:\,|Y_N|^2\,:+2\text{Re}(Y_N \overline{\Dr_{N,\gamma}})+|\Dr_{N,\gamma}|^2) dx|\le K_N\right\}}\notag\\
& \hphantom{XX} -\frac{1}{2}\int_{0}^{1}\|\theta_\gamma(t)\|^2_{L^2}dx\Big] \notag\\
& \ge {(1-\eps)}\ld_N R_{p^*\!,N}(\Dr_{\gamma})	\PP \Bigg (\Big|\int_{\R^d}(:\!|Y_N|^2\!:+2\text{Re}(Y_N \overline{\Dr_{N,\gamma}})+|\Dr_{N,\gamma}|^2) dx\Big |\le K_N \Bigg) \notag\\
& \hphantom{XX}-C_\eps \ld_N \E[R_{p^*\!,N}(Y)]-\frac{1}{2}\int_{0}^{1}\E\big[\|\theta_\gamma(t)\|^2_{L^2(\R^d)}\big]dt \notag\\
& \ge  C_1  \ld_N N^{\frac{2}{\beta}}K_N^{1+\frac{2}{d}}-C_2K_N N^{\frac{2}{\beta}}-C_3\ld_N \notag  \\
& \ge N^{\frac{2}{\beta}}K_N (\overline{C_1} \ld^*-C_2)-C_3\ld_N\to \infty \notag ,
\end{align}
provided $\ld_*$ is large enough. 
It remains to prove \eqref{claim_prob}. From Corollary \ref{LEM:conv0} we infer 
\begin{align}
\begin{split}
\E& \bigg[\Big|\int_{\R^d} :\!{|Y_N|^2}\!: + 2 \text{Re}(Y_N \overline{\Dr_{N,\gamma}}) + |\Dr_{N,\gamma}|^2 dx\Big|^2\bigg] \\
& \les
1+  \E\bigg[\Big|\int_{\R^d}Y_N \overline{\Dr_{N,\gamma}}\, dx \Big|^2\bigg]
+\bigg(\int_{\R^d}|\Dr_{N,\gamma}|^2 \,dx\bigg)^2.\\
\end{split}
\label{B8}
\end{align}
Then, by \eqref{drift_gamma} and \eqref{L2scaling1} we obtain 
\begin{align}
\bigg(\int_{\R^d}|\Dr_{N,\gamma}|^2 \,dx\bigg)^2=\gamma^2K^2_N\|\P_{N} f_M\|^{4}_{L^2}\sim \gamma^2 K^{2}_N.
\label{term1}
\end{align}
Moreover, by \eqref{P2},  \eqref{drift_gamma} and \eqref{H_1}
\begin{align}
\E\bigg[\Big|\int_{\R^d}Y_N \overline{\Dr_{N,\gamma}}\, dx \Big|^2\bigg]&=\gamma K_N\E\bigg[\bigg|\sum_{n=0}^{N}\frac{B_n(1)}{\nu_n}\langle  f_M, h_n\rangle_{L^2}\bigg|^2\bigg]\notag \\
&=\gamma K_N\sum_{n=0}^{N}\nu^{-2}_n|\langle  f_M,h_n\rangle_{L^2}|^2\notag \\
& \le \gamma K_N \|\mathcal{L}^{-\frac{1}{2}}f_M\|^2_{L^2}\les \gamma K_N N^{-\frac{2}{\beta}}.
\label{term2}
\end{align}
In order to conclude, by combining \eqref{B8}, \eqref{term1}, \eqref{term2}, Chebyshev’s inequality with  the assumption $K_N\gtrsim \log N$, we obtain 
\begin{align*}
\PP \Bigg (\Big|\int_{\R^d}(:\!|Y_N|^2\!:+2\text{Re}(Y_N \overline{\Dr_{N,\gamma}})+|\Dr_{N,\gamma}|^2) dx\Big |\ge K_N \Bigg) \les \frac{1}{K^{2}_N}+\frac{\gamma}{K_N N^{\frac{2}{\beta}}}+\gamma^2\le \frac{1}{2},
\end{align*}
provided $\gamma$ is sufficiently small and $N$ is large.

\section{Weakly coupling regime: normalizability}
In this section, we prove the normalizability statement in Theorem \ref{main}. In particular, the proof of Theorem \ref{main} (i)  follows once we show that 
\begin{align}
	\label{bound_r}
	\sup_{N\in \N}\Big|\!\Big| \ind_{\left\{|\int_{\R^d} \, :|\P_{N}u(x)|^2: \, dx|\le K_N\right\}}\, \exp\big({{\ld_N}R_{p^*\!,N}(u)}\big)\Big|\!\Big| _{L^{r}(\mu)}< \infty.
\end{align}
Indeed, let us define $K:=\lim_{N\to +\infty}K_N$. Then, by combining Lemma \ref{small_mu}, Corollaries \ref{Lp_conv} and \ref{LEM:conv0} with the fact that $\ld_N\to 0$ as $N\to +\infty$,  we infer
\begin{align*}
\lim_{N\to +\infty}\exp(\ld_N R_{p^*\!,N}(u))&=\ind,\\
\lim_{N\to +\infty}\ind_{\left\{|\int_{\R^d} \, :|\P_{N}u(x)|^2: \, dx|\le K_N\right\}}&=\ind_{\left\{|\int_{\R^d} \, :|u(x)|^2: \, dx|\le K\right\}}.
\end{align*}
Then, by Vitali's convergence theorem, Theorem \ref{main} (i) follows once \eqref{bound_r} holds. Moreover, because of the exponential nature of the partition function, it is enough to prove \eqref{bound_r} for $r=1$, the general case being similar. Furthermore, by noting that 
\begin{align*}
&\ind_{\left\{|\int_{\R^d} \, :|\P_{N}u(x)|^2: \, dx|\le K_N\right\}}\cdot \exp\big({{\ld_N}R_{p^*\!,N}(u)}\big)\\
&\phantom{XX}\le \exp\Big(\ind_{\left\{|\int_{\R^d} \, :|\P_{N}u(x)|^2: \, dx|\le K_N\right\}}\cdot {{\ld_N}R_{p^*\!,N}(u)}\Big),
\end{align*}
the bound \eqref{bound_r} follows once we prove that 
\begin{align*}
\sup_{N\in \N}\E_{\mu}\Big[\exp\Big(\ind_{\left\{|\int_{\R^d} \, :|\P_{N}u(x)|^2 \, :dx|\le K_N\right\}}\cdot {{\ld_N}R_{p^*\!,N}(u)}\Big)\Big]<\infty. 
\end{align*}

\noi
By Lemma \ref{variational_f} and \eqref{P2} we have 
\begin{align}
&\log  \,\E_{\mu}\Big[\exp\Big(\ind_{\left\{|\int_{\R^d} \, :|\P_{N}u(x)|^2: \, dx|\le K_N\right\}}\cdot {{\ld_N}R_{p^*\!,N}(u)}\Big)\Big]\notag\\
&\ =\!\log \E\Big[\exp\Big(\ind_{\left\{|\int_{\R^d} \, :|Y_N(x)|^2: \, dx|\le K_N\right\}}\cdot {{\ld_N}R_{p^*\!,N}(Y)}\Big)\Big]\notag\\
&\ = \!\sup_{\dr \in \mathbb H_a}
\E\bigg[\ld_N R_{p^*\!,N}( Y + I(\dr)(1))\cdot \ind_{\{|\int_{\R^d} \, : |Y_N+\P_{N}I(\dr)(1)|^2: \, dx| \, \leq K_N\}} - \frac{1}{2} \int_0^1 \| \dr(t) \|_{L^2_x}^2 dt \bigg].
\label{WN}
\end{align}

\noi
Thus, similarly to \cite[Proposition 3.1]{GOTT}, we prove the following key proposition for normalizability, which aims to control the right hand side of \eqref{WN} uniformly in $N\in \N$.

\begin{proposition}
\label{norm_log}
Let $Y_N$ as in \eqref{P2} and $\Dr_N$ as in \eqref{trunc_t}.  If \eqref{K1} holds 
then, 
\begin{align}
\sup_{\dr\in \mathbb H_a}
\E\bigg[
\ld_N \| \Dr_N\|_{L^{p^*}}^{p^*}
\cdot\ind_{\{|\int_{\R^d} \, : |Y_N +\Dr_N|^2: \, dx| 
\leq K_N\}}	- \frac 1{4} \int_0^1 \| \dr(t)\|_{L^2_x} ^2 dt\bigg]\les 1.
	\label{H1}
\end{align}
\label{prop_norm}
\end{proposition}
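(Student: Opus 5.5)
We fix $N$ and work on the event $E:=\{|\int_{\R^d}:\!|Y_N+\Dr_N|^2\!:dx|\le K_N\}$. Since $p^*=2+\frac4d$, the inequality \eqref{GNS} reads
\begin{align*}
\|\Dr_N\|^{p^*}_{L^{p^*}}\le \textup{C}_{\textup{GNS}}\,\|\Dr_N\|^{2}_{\mathcal{H}^1}\,\|\Dr_N\|^{\frac4d}_{L^2}.
\end{align*}
This is exactly the structure we want. The $\mathcal H^1$ factor is absorbed by the cost term through Lemma \ref{lem13}, since $\|\Dr_N\|_{\mathcal H^1}^2\le\|\Dr\|_{\mathcal H^1}^2\le\int_0^1\|\dr\|_{L^2}^2dt$. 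It therefore suffices to show that on $E$ we have $\ld_N\|\Dr_N\|_{L^2}^{4/d}\ll 1$, up to error terms with bounded expectation.

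\textbf{Step 1: the $L^2$ norm of the drift on $E$.} Expanding the Wick square gives
\begin{align*}
\int_{\R^d}:\!|Y_N+\Dr_N|^2\!:dx=\int_{\R^d}:\!|Y_N|^2\!:dx+2\Re\int_{\R^d}Y_N\overline{\Dr_N}dx+\|\Dr_N\|_{L^2}^2.
\end{align*}
On $E$ this yields
\begin{align*}
\|\Dr_N\|_{L^2}^2\le K_N+\Big|\int :\!|Y_N|^2\!:dx\Big|+2\Big|\int Y_N\overline{\Dr_N}\,dx\Big|.
\end{align*}
For the cross term we use duality $\mathcal H^{-1+\delta}$–$\mathcal H^{1-\delta}$ together with interpolation:
\begin{align*}
\Big|\int Y_N\overline{\Dr_N}\Big|\le\|Y_N\|_{\mathcal H^{-1+\delta}}\|\Dr_N\|_{L^2}^{\delta}\|\Dr_N\|_{\mathcal H^1}^{1-\delta}.
\end{align*}
Applying Young's inequality, this is at most
\begin{align*}
\tfrac14\|\Dr_N\|_{L^2}^2+C\|Y_N\|_{\mathcal H^{-1+\delta}}^{c}+\eta\|\Dr_N\|_{\mathcal H^1}^{2}.
\end{align*}
A simpler alternative is the bound $|\int Y_N\overline{\Dr_N}|\le \frac14\|\Dr_N\|_{L^2}^2+4\|Y_N\|_{L^2}^2$. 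However $\E\|Y_N\|_{L^2}^2\sim\log N$, which is exactly where the $\log N$ in \eqref{K1} comes from, so the crude route is acceptable. Using it, on $E$ we get
\begin{align*}
\|\Dr_N\|_{L^2}^2\les K_N+\Big|\int:\!|Y_N|^2\!:\Big|+\|Y_N\|_{L^2}^2.
\end{align*}

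\textbf{Step 2: absorption.} Combining Step 1 with the GNS bound, on $E$ we have
\begin{align*}
\ld_N\|\Dr_N\|^{p^*}_{L^{p^*}}\le C\ld_N\big(K_N+A_N\big)^{\frac2d}\|\Dr_N\|_{\mathcal H^1}^2,\qquad A_N:=\Big|\int :\!|Y_N|^2\!:\Big|+\|Y_N\|^2_{L^2}.
\end{align*}
We split according to whether $A_N\le L(K_N+\log N)$ for a large fixed $L$. On this good set, \eqref{K1} gives
\begin{align*}
C\ld_N(K_N+A_N)^{2/d}\le C(1+L)^{2/d}\ld_*\le\tfrac14
\end{align*}
once $\ld_*$ is chosen small. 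The contribution there is then dominated by $\frac14\int_0^1\|\dr\|_{L^2}^2dt$.

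\textbf{Step 3: the bad set (main obstacle).} On the bad set $\{A_N>L(K_N+\log N)\}$ the coefficient is not small, and we must also use that $\ld_N\to0$. We bound
\begin{align*}
\ld_N\|\Dr_N\|^{p^*}_{L^{p^*}}\le C\ld_N A_N^{2/d}\|\Dr_N\|_{\mathcal H^1}^2\,\ind_{\{A_N>L(K_N+\log N)\}}.
\end{align*}
Taking the expectation in the supremum is delicate, because $\|\Dr_N\|_{\mathcal H^1}^2$ is not bounded by a deterministic quantity; it is only controlled after integrating against the cost term. We would handle this pointwise, using a second-order form of GNS (or a $\mathcal H^{1-}$ Sobolev embedding) together with the fact that $\Dr_N$ lives on frequencies $\le N$, so that $\|\Dr_N\|_{\mathcal H^1}^2\le \nu_N^{2}\|\Dr_N\|_{L^2}^2\les N\|\Dr_N\|_{L^2}^2$. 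We then show $\PP(A_N>L(K_N+\log N))\le e^{-cL\log N}$, polynomially small in $N$ with arbitrary power. This tail is available because $:\!|Y_N|^2\!:$ lies in the second chaos (Lemma \ref{hyper}), and because $\|Y_N\|_{L^2}^2-\E\|Y_N\|^2_{L^2}$ is a centered second-chaos variable with variance $\sum_n\nu_n^{-4}\les1$. The concentration of $\|Y_N\|_{L^2}^2$ around its mean $\sim\log N$ is what makes this tail superpolynomially small compared with $N^{C}$.

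The remaining issue is that on the bad set the drift's own cost must still dominate. We resolve it with the cruder pointwise inequality
\begin{align*}
\ld_N\|\Dr_N\|^{p^*}_{L^{p^*}}\le \tfrac18\|\Dr_N\|^2_{\mathcal H^1}+C(\ld_N A_N^{2/d})^{\kappa}
\end{align*}
for a suitable $\kappa$, obtained via Young's inequality in the $\mathcal H^1$ variable after capping $\|\Dr_N\|_{L^2}$ as in Step 1 with the polynomial frequency bound. This reduces the bad-set contribution to $\E[\mathrm{poly}(N,A_N)\ind_{\text{bad}}]\les1$ by Cauchy–Schwarz and the tail estimate. I expect this bookkeeping to be the hardest step. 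Finally, we take the supremum over $\dr$, which yields \eqref{H1}.
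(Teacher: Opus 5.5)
Your argument is correct, and it takes a genuinely different and considerably shorter route than the paper.

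\textbf{The paper's route.} The paper splits according to the drift. Either \eqref{H4} holds, which is absorbed exactly as in your Step 2, or $\|\Dr_N\|_{L^2}^2\les|\int_{\R^d} Y_N\overline{\Dr_N}\,dx|$ as in \eqref{H5}. In the latter case it uses Lemma \ref{LEM:Young} to bound $\ld_*^{d/2}(\log N)^{-1}\|\Dr_N\|^2_{\mathcal H^1}\|\Dr_N\|^2_{L^2}$ by $(\log N)^{-1}e^{2\ld_*^{d/4}\|\Dr_N\|_{L^2}^2}+\ld_*^{d/4}\|\Dr_N\|^2_{\mathcal H^1}$. It then uses the dyadic decomposition $\Dr_N=\sum_j(\al_j\proj_jY_N+w_j)$ to show $\|\Dr_N\|_{L^2}^2\les\log(2+\|\Dr_N\|_{\mathcal H^1})+B_{1,N}^{1/2}+B_{2,N}$. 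Hence that exponential is at most polynomial in $\|\Dr_N\|_{\mathcal H^1}$, plus a random variable $X_N$ with bounded second moment. A H\"older argument, split into $d>2$, $d=2$ and $d=1$, finishes.

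\textbf{Your route.} You bound the cross term by Cauchy--Schwarz. This gives, on $E$, the pointwise bound $\|\Dr_N\|_{L^2}^2\les K_N+A_N$, with $A_N$ depending on $Y_N$ alone. You then split $\Omega$ rather than the drift.
\begin{itemize}
\item On $\{A_N\le L(K_N+\log N)\}$ you absorb via \eqref{GNS}, \eqref{K1} and Lemma \ref{lem13}.
\item On the complement, the cap $\|\Dr_N\|^2_{\mathcal H^1}\les N\|\Dr_N\|^2_{L^2}$ yields the drift-independent bound $\ld_N\|\Dr_N\|^{p^*}_{L^{p^*}}\les N\ld_N(K_N+A_N)^{1+\frac2d}$.
\item This is paid for by $\PP(A_N>L(K_N+\log N))\les e^{-cL(K_N+\log N)}$, which follows from exponential integrability of the second-chaos variable $\int_{\R^d}:\!|Y_N|^2\!:dx$ (Lemma \ref{hyper} and Corollary \ref{LEM:conv0}).
\end{itemize}

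\textbf{What each buys.} Your route works because \eqref{K1} already budgets a full $\log N$ of $L^2$ mass, which is exactly what correlating with $Y_N$ can cost. It is uniform in $d$ and avoids the dyadic machinery, at the price of a polynomial loss in $N$ on a rare event. The paper's mechanism, inherited from \cite{OST,GOTT}, is finer. It shows that the mass a drift gains by correlating with $Y_N$ is only logarithmic in its own energy, up to $N$-independent random variables. That is the tool one needs when polynomial losses in $N$ cannot be afforded.

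\textbf{Points to clean up in Step 3.}
\begin{itemize}
\item The displayed inequality with $C(\ld_NA_N^{2/d})^\kappa$ is not what your reasoning produces, since the frequency cap brings in a factor $N$. You also need no Young step. The drift-independent bound above, plus Cauchy--Schwarz, $\|A_N\|_{L^q(\Omega)}\les_q\log N$ and \eqref{K1}, gives $\E\big[N\ld_N(K_N+A_N)^{1+\frac2d}\ind_{\{A_N>L(K_N+\log N)\}}\big]\les N(K_N+\log N)e^{-c'L(K_N+\log N)}$.
\item That quantity is bounded uniformly in $N$ only once $c'L>1$: the tail is polynomial in $N$, not superpolynomial. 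So the order of constants matters. Since $A_N\le2|\int_{\R^d}:\!|Y_N|^2\!:dx|+C_0\log N$, for $L\ge 2C_0$ the bad event forces $|\int_{\R^d}:\!|Y_N|^2\!:dx|\ge\frac L4(K_N+\log N)$. Fix $L$ large depending only on the universal tail constant, and only then choose $\ld_*$ small depending on $L$.
\item The decay $\ld_N\to0$ plays no role; only \eqref{K1} is used.
\item The $\mathcal H^{-1+\delta}$ interpolation and ``second-order GNS'' detours can be dropped.
\end{itemize}
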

\noi 
Note that, by Proposition \ref{norm_log}, \cite[Corollary 2.4]{RSTY},  \eqref{Wick0}, and  the inequality
\begin{align*}
	|z_1+z_2|^{p^*}\le (1+\varepsilon) |z_1|^{p^*}+C_\varepsilon|z_2|^{p^*}\quad \forall z_1,z_2\in \C,
\end{align*}
 we derive that 
\begin{align}
\textup{RHS of}\ \eqref{WN} \les 1,
\label{norma_f}
\end{align}
uniformly in $N\in \N$.
In view of \eqref{norma_f} we conclude the proof of Theorem \ref{main}-(i) (by assuming Proposition \ref{prop_norm}). 

\begin{proof}[Proof of Proposition \ref{prop_norm}]
On $A_N := 
\big\{|\int_{\R^d} :\!|Y_N +\Dr_N|^2\!: dx| 
\leq K_N\big\}$, we have 
\begin{align*}
\|\Dr_N\|_{L^2}^2  \le K_N +\int_{\R^d}\s_N(x)dx  + 2 \bigg|\int_{\R^d} Y_N \overline{\Dr_N} dx\bigg|, 
\end{align*}

\noi
where
 \begin{align*}
 \s_N(x)=\sum_{n=0}^{N}\frac{h^2_n(x)}{\nu^2_n}.
\end{align*}

\noi 
 In particular, $\int_{\R^d}\s_N(x)dx\sim \log N$. 
First, 
suppose that  we have
\begin{align}
\| \Dr_N  \|_{L^2}^2 \les K_N+\log N. 
\label{H4}
\end{align}

\noi 
Then, on $A_N$, by combining \eqref{GNS},  \eqref{K1} with \eqref{H4} we have
\begin{align}
\ld_N \| \Dr_N\|_{L^{p^*}}^{p^*}\les \ld_* (K_N+\log N)^{-\frac{2}{d}}  \|\Dr_N\|^2_{\mathcal{H}^1} \|\Dr_N\|^{\frac{4}{d}}_{L^2}\les \ld_* \|\Dr_N\|^2_{\mathcal{H}^1}.
\label{case1} 
\end{align}
Then, by combining Lemma \ref{lem13} with \eqref{case1}, we derive \eqref{H1} assuming \eqref{H4} and $\ld_*$ sufficiently small. 

Thus, it remains to consider the case 
\begin{align}
\|\Dr_N  \|_{L^2}^2 \les \bigg| \int_{\R^d } Y_N  \overline{\Dr_N}  dx \bigg|.
\label{H5}
\end{align}
Note that the following argument holds under the weaker assumption 
\begin{align}\label{upper_weak}
\ld_N\le \ld_*(\log N)^{-\frac{2}{d}}.
\end{align}
First, by  \eqref{GNS} and \eqref{upper_weak}
\begin{align}
\ld_N \| \Dr_N\|_{L^{p^*}}^{p^*}&\les \ld_* (\log N)^{-\frac{2}{d}}  \|\Dr_N\|^2_{\mathcal{H}^1} \|\Dr_N\|^{\frac{4}{d}}_{L^2}\notag\\
&=\big(\ld^{\frac{d}{2}}_*(\log N)^{-1}\|\Dr_N\|^{d}_{\mathcal{H}^{1}}\|\Dr_N\|^{2}_{L^2})\big)^{\frac{2}{d}}\notag\\
&=\big(\ld^{\frac{d}{2}}_*(\log N)^{-1}\|\Dr_N\|^{2}_{\mathcal{H}^{1}}\|\Dr_N\|^{2}_{L^2})\big)^{\frac{2}{d}}\|\Dr_N\|^{2-\frac{4}{d}}_{\mathcal{H}^{1}}.
\label{ineq8}
\end{align}
From Lemma \ref{LEM:Young} we infer that 
\begin{align}
\ld^{\frac{d}{2}}_*&(\log N)^{-1}\|\Dr_N||^2_{\mathcal{H}^{1}}\|\Dr_N\|^2_{L^{2}}=\ld^{\frac{d}{4}}_*(\log N)^{-1}\|\Dr_N\|^2_{L^{2}} \bigg(\ld^{\frac{d}{4}}_*\|\Dr_N\|^2_{L^2}\frac{\|\Dr_N\|^2_{\mathcal{H}^1}}{\|\Dr_N\|^2_{L^{2}}}\bigg)\notag\\
& \le \ld^{\frac{d}{4}}_*(\log N)^{-1}\|\Dr_N\|^2_{L^{2}}\bigg(e^{\ld^{\frac{d}{4}}_*\|\Dr_N\|^2_{L^2}}+\frac{\|\Dr_N\|^2_{\mathcal{H}^{1}}}{\|\Dr_N\|^2_{L^2}}\log\bigg(\frac{\|\Dr_N\|^2_{\mathcal{H}^1}}{\|\Dr_N\|^2_{L^{2}}}\bigg)\bigg)\notag\\
& \les (\log N)^{-1}e^{2\ld^{\frac{d}{4}}_*\|\Dr_N\|^2_{L^{2}}}+\ld^{\frac{d}{4}}_*\|\Dr_N\|^2_{\mathcal{H}^{1}},
\label{ineq_9}
\end{align}
where to derive \eqref{ineq_9} we have also used the definition of $\nu_n$ in \eqref{ld_n} and  
\begin{align*}
	\|\Dr_N\|^2_{\mathcal{H}^1}=\sum_{n=0}^{N}\nu^{2}_n |\langle \Dr_N, h_n\rangle_{L^2}|^2\les N\|\Dr_N\|^2_{L^2}.
\end{align*}

\noi 
Next, we claim that there exist  non-negative random variables $X_N(\omega)$, 
\begin{align}
\sup_{N \in \N} \E[ X^2_N(\o)] < \infty
\label{H6a}
\end{align}
such that 
\begin{align}
e^{ 2\ld_*^\frac{1}{4}  \|\Dr_N\|_{L^2}^2}
\les   1+   \| \Dr_N  \|_{\mathcal{H}^1 }^2
+ X_N(\o).
\label{H7}
\end{align}
provided $\ld_*$ is small enough. Assuming that \eqref{H6a} and \eqref{H7} hold, from \eqref{ineq8} and \eqref{ineq_9} we have 
\begin{align}
&\ld_N \| \Dr_N\|_{L^{p^*}}^{p^*}\les \big((\log N)^{-1}e^{2\ld^{\frac{d}{4}}_*\|\Dr_N\|^2_{L^{2}}}+\ld^{\frac{d}{4}}_*\|\Dr_N\|^2_{\mathcal{H}^{1}}\big)^{\frac{2}{d}}\|\Dr_N\|^{2-\frac{4}{d}}_{\mathcal{H}^{1}}\notag\\
& \les \big((\log N)^{-1}+(\log N)^{-1}\|\Dr_N\|^2_{\mathcal{H}^{1}}+(\log N)^{-1} X_N(\o)+\ld^{\frac{d}{4}}_*\|\Dr_N\|^2_{\mathcal{H}^{1}}\big)^{\frac{2}{d}}\|\Dr_N\|^{2-\frac{4}{d}}_{\mathcal{H}^{1}}\notag \\
& \les (\log N)^{-\frac{2}{d}}\|\Dr_N\|^{2-\frac{4}{d}}_{\mathcal{H}^{1}}+\ld^{\frac{1}{2}}_*\|\Dr_N\|^2_{\mathcal{H}^{1}}+(\log N)^{-\frac{2}{d}} (X_N(\o))^{\frac{2}{d}}\|\Dr_N\|^{2-\frac{4}{d}}_{\mathcal{H}^{1}}.
\label{upper_9}
\end{align}
Note that we can reduce to study the case $\|\Dr_N\|_{L^2}\ge 1$. Indeed, if not,  we are in the case described in \eqref{H4} which has been already analyzed. In particular, if $d>2$, \eqref{upper_9} yields
\begin{align}\label{ineq_dbig}
\ld_N \| \Dr_N\|_{L^{p^*}}^{p^*}\les \ld^{\frac{1}{2}}_*\|\Dr_N\|^2_{\mathcal{H}^{1}}+(\log N)^{-\frac{2}{d}} (X_N(\o))^{\frac{2}{d}}\|\Dr_N\|^{2-\frac{4}{d}}_{\mathcal{H}^{1}}.
\end{align}
Furthermore, if we call $B_N:=\left\{\o\in \Omega: 1\le \|\Dr_N  \|_{L^2}^2 \les \bigg| \int_{\R^d } Y_N  \overline{\Dr_N}  dx \bigg|\right\}$, by H\"older  inequality and \eqref{H6a} we have
\begin{align}\label{holder}
\E\Big[(X_N(\o))^{\frac{2}{d}}\|\Dr_N\|^{2-\frac{4}{d}}_{\mathcal{H}^{1}}\cdot\ind_{A_N\cap B_N}\Big]&\le \big(\E[X_N(\o)]\big)^{\frac{2}{d}}\big(\E[\|\Dr_N\|^2_{\mathcal{H}^{1}}\cdot\ind_{A_N\cap B_N}]\big)^{1-\frac{2}{d}}\notag\\
& \les \big(\E[\|\Dr_N\|^2_{\mathcal{H}^{1}}\cdot\ind_{A_N\cap B_N}]\big)^{1-\frac{2}{d}}.
\end{align}
Thus, from \eqref{ineq_dbig}, \eqref{holder} and Lemma \ref{lem13} we have
\begin{align*}
\E&\bigg[
\ld_N \| \Dr_N\|_{L^{p^*}}^{p^*}
\cdot\ind_{A_N\cap B_N}	- \frac 1{2} \int_0^1 \| \dr(t)\|_{L^2_x} ^2 dt\bigg]\notag\\
&\le \E\bigg[
\bigg(C\ld^{\frac{1}{2}}_*-\frac{1}{4}\bigg) \| \Dr_N\|^2_{\mathcal{H}^{1}}\cdot\ind_{A_N\cap B_N}
\bigg]+C(\log N)^{-\frac{2}{d}}\E\bigg[(X_N(\o))^{\frac{2}{d}}\|\Dr_N\|^{2-\frac{4}{d}}_{\mathcal{H}^{1}}\cdot\ind_{A_N\cap B_N}\bigg]\notag\\
& \le \bigg(C\ld^{\frac{1}{2}}_*-\frac{1}{4}\bigg)\E\bigg[\| \Dr_N\|^2_{\mathcal{H}^{1}}\cdot\ind_{A_N\cap B_N}
\bigg]+C_1(\log N)^{-\frac{2}{d}}\bigg(\E\bigg[\|\Dr_N\|^2_{\mathcal{H}^1}\cdot\ind_{A_N\cap B_N}\bigg]\bigg)^{1-\frac{2}{d}}\notag\\
& \les 1,
\end{align*}
provided $\ld_*$ is small enough. 

Next,  if $d=2$, \eqref{upper_9} leads to 
\begin{align*}
\ld_N \| \Dr_N\|_{L^{p^*}}^{p^*}\les (\log N)^{-1}+(\log N)^{-1}X_N(\o)+\ld^{\frac{1}{2}}_*\|\Dr_N\|^2_{\mathcal{H}^{1}},
\end{align*}
which (in view of \eqref{H6a}) implies \eqref{H1}.
Finally, if $d=1$ (and $\o\in B_N$), \eqref{upper_9} leads to 
\begin{align*}
\ld_N \| \Dr_N\|_{L^{p^*}}^{p^*}& \les (\log N)^{-2}+(\log N)^{-2}\|\Dr_N\|^{-2}_{L^{2}}(X_N(\o))^{2}+\ld^{\frac{1}{2}}_*\|\Dr_N\|^2_{\mathcal{H}^{1}}\notag\\
& \les (\log N)^{-2}+(\log N)^{-2}(X_N(\o))^{2}+\ld^{\frac{1}{2}}_*\|\Dr_N\|^2_{\mathcal{H}^{1}}.
\end{align*}
Then, by \eqref{H6a} and \eqref{H7} we again conclude \eqref{H1}.
In view of the above analysis, in order to finish the proof of \eqref{H1},  it remains to prove \eqref{H6a} and \eqref{H7}. We argue as in \cite{GOTT}. 

	Define the  sharp daydic frequency projections $\{\Pi_j\}_{j \in \N}$
by setting $\Pi_1 = \P_{2}$
and $\Pi_j = \P_{2^j} - \P_{2^{j-1}}$.
Then,
write  $\Dr_N $ as 
\begin{align*}
\Dr_N   = \sum_{j=1}^\infty (\al_j \proj_j Y_N  + w_j),
\end{align*}
where
\begin{align*}
\al_j &:=
\begin{cases}
\frac{\jb{\Dr_N , \proj_j Y_N }}{\|\proj_j Y_N \|_{L^2}^2},  & \text{if } \| \proj_j Y_N  \|_{L^2} \neq 0, \\
0, & \text{otherwise},
\end{cases}
\qquad
\text{and}
\qquad  
w_j :=
\proj_j \Dr_N  - \al_j \proj_j Y_N .
\end{align*}

\noi
Noting that $w_j$ is orthogonal to $\proj_j Y_N $ and $Y_N $ in $L^2(\R^d)$, 
we have 
\begin{align}
\| \Dr_N  \|_{L^2}^2
&= \sum_{j=1}^\infty \Big( |\al_j|^2 \| \proj_j Y_N  \|_{L^2}^2 + \| w_j \|_{L^2}^2 \Big), \label{HH2bis} \\
\int_{\R^d} Y_N  \overline{\Dr_N}  dx
&= \sum_{j=1}^\infty \overline{\al_j} \| \proj_j Y_N  \|_{L^2}^2.
\label{HH2}
\end{align}

Then, by \eqref{H5}, \eqref{HH2bis} and \eqref{HH2} we obtain
\begin{align}
\begin{split}
\sum_{j=1}^\infty  |\al_j|^2 \| \proj_j Y_N  \|_{L^2}^2\les \bigg| \sum_{j=1}^\infty \overline{\al_j} \| \proj_j Y_N  \|_{L^2}^2 \bigg|.
\end{split}
\label{HH3}
\end{align}

Furthemore, if we 
fix  a random number $j_0 \in \N$ (to be chosen later), we have
\begin{align}
\bigg|\sum_{j=j_0+1}^\infty \overline{\al_j} \| \proj_j Y_N  \|_{L^2}^2 \bigg|&\le \bigg(\sum_{j=1}^{\infty}2^{j}\|\proj_j \Dr_N\|^2_{L^2} \bigg)^{\frac{1}{2}}\bigg(\sum_{j=j_0+1}^{\infty}2^{-j}\|\proj_j Y_N\|^2_{L^2}\bigg)^{\frac{1}{2}}\notag\\
& \sim \|\Dr_N\|_{\mathcal{H}^{1}}\bigg(\sum_{j=j_0+1}^{\infty}2^{-j}\|\proj_j Y_N\|^2_{L^2}\bigg)^{\frac{1}{2}}.
\label{H_norm_p}
\end{align}
Similarly, by Cauchy-Schwarz, Young's inequality and \eqref{HH3} we infer 
\begin{align}
\bigg|\sum_{j=1}^{j_0} \overline{\al_j} \| \proj_j Y_N  \|_{L^2}^2 \bigg|&\le \bigg(\sum_{j=1}^{\infty}|\alpha_j|^{2}\|\proj_j Y_N\|^2_{L^2} \bigg)^{\frac{1}{2}}\bigg(\sum_{j=1}^{j_0}\|\proj_j Y_N\|^2_{L^2}\bigg)^{\frac{1}{2}}\notag\\
& \le \bigg |\sum_{j=1}^\infty \overline{\al_j} \| \proj_j Y_N  \|_{L^2}^2 \bigg|^{\frac{1}{2}}\bigg(\sum_{j=1}^{j_0}\|\proj_j Y_N\|^2_{L^2}\bigg)^{\frac{1}{2}}\notag\\
& \le \frac{1}{2}\bigg |\sum_{j=1}^\infty \overline{\al_j} \| \proj_j Y_N  \|_{L^2}^2 \bigg|+\frac{1}{2}\sum_{j=1}^{j_0}\|\proj_j Y_N\|^2_{L^2}.
\label{ineq6}
\end{align}
From \eqref{ineq6} and \eqref{H_norm_p} we deduce 
\begin{align}
\bigg|\sum_{j=1}^\infty \overline{\al_j} \| \proj_j Y_N  \|_{L^2}^2 \bigg|\les \|\Dr_N\|_{\mathcal{H}^{1}}\bigg(\sum_{j=j_0+1}^{\infty}2^{-j}\|\proj_j Y_N\|^2_{L^2}\bigg)^{\frac{1}{2}}+\sum_{j=1}^{j_0}\|\proj_j Y_N\|^2_{L^2}.
\label{eq21}
\end{align}
Next, we split as follows
\begin{align}
\sum_{j=j_0+1}^{\infty}2^{-j}\|\proj_j Y_N\|^2_{L^2}=\sum_{j=j_0+1}^{\infty}2^{-j}\int_{\R^d}:\!{|\proj_j Y_N|}^2\!:dx+\sum_{j=j_0+1}^{\infty}2^{-j}\int_{\R^d}\E\big[|\proj_j Y_N|^2\big]dx.
\label{splitting21}
\end{align}
Note that, 
\begin{align}
\int_{\R^d}\E\big[|\proj_j Y_N|^2\big]dx\le\sum_{2^{j-1}+1}^{2^j} \frac{1}{\nu^2_n}\sim \sum_{2^{j-1}+1}^{2^j} \frac{1}{n}\les 1.
\label{sum_j}
\end{align}
\noi
Now, by recalling that $\E\big[(|g_{n_1}(\o)|^2-1)(|g_{n_2}(\o)|^2-1)\big]=2\delta_{n_1,n_2}$ (cf \cite[Proof of Corollary 2.6]{RSTY}), we have that
\begin{equation*}
\E\bigg[\bigg(\sum_{n_1=2^{j-1}+1}^{2^j}\frac{|g_{n_1}(\o)|^2-1}{\nu^2_{n_1}}\bigg)\bigg(\sum_{n_1=2^{k-1}+1}^{2^k}\frac{|g_{n_2}(\o)|^2-1}{\nu^2_{n_2}}\bigg)\bigg]=\sum_{2^{j-1}+1}^{2^{j}}\frac{2}{\nu^{4}_n}\sim \sum_{2^{j-1}+1}^{2^{j}}\frac{1}{n^2}.
\end{equation*}
As a consequence, 
\begin{align}
\E\bigg [\bigg(\sum_{j=j_0+1}^{\infty}2^{-j}\int_{\R^d}:\!{|\proj_j Y_N|}^2\!:dx\bigg)^2\bigg]\les  \sum_{j=j_0+1}^{\infty} 2^{-2j} \bigg(\sum_{2^{j-1}+1}^{2^j}\frac{1}{n^2}\bigg)\sim   2^{-3j_0}.
\label{sum_large}
\end{align}
Now, let us set a random variable defined as follows:
\begin{align}
B_{1,N}(\o):=\bigg(\sum_{k=1}^{\infty}2^{\frac{5}{2}k}\Big(\sum_{j=k+1}^{\infty}2^{-j}\int_{\R^d}:\!{|\proj_j Y_N|}^2\!:dx\Big)^2\bigg)^{\frac{1}{2}}.	
\label{def_B1}
\end{align}
Then, by \eqref{sum_large} and \eqref{def_B1} we infer
\begin{align*}
\sum_{j=j_0+1}^{\infty}2^{-j}\|\proj_j Y_N\|^2_{L^2} \les 2^{-\frac{5}{4}j_0} B_{1,N}(\o)+2^{-j_0}.
\end{align*}
Note that \eqref{sum_j} also implies
\begin{align}
\sum_{j=1}^{j_0}\int_{\R^d}\E\big[|\proj_j Y_N|^2\big]dx\les j_0.
\label{sum21}
\end{align}
Next,  we set a non negative random variable $B_{2,N}(\o)$ by 
\begin{align}
\begin{split}
B_{2, N}(\o) & = 
\sum_{k = 1}^\infty
\bigg|  \int_{\R^d} :\! | \proj_{ k}Y_N |^2 \!: dx\bigg|.
\end{split}
\label{HH12}
\end{align}
Thus, by combining \eqref{splitting21}, \eqref{sum21} with \eqref{HH12} and arguing as above we infer
\begin{align}
\sum_{j=1}^{j_0}\|\proj_j Y_N\|^2_{L^2}\les B_{2,N}(\o)+j_0.
\label{eq22}
\end{align}
Finally, let us choose $j_0(\o)$ such that 
\begin{align}
2^{\frac{j_0(\o)}{2}}\sim 2+\|\Dr_N(\o)\|_{\mathcal{H}^1}. 
\label{exp_j}
\end{align} 
Then, by \eqref{H5}, \eqref{eq21}, \eqref{eq22} and \eqref{exp_j} we infer
\begin{align}
\begin{split}
\|\Dr_N\|^2_{L^{2}}&\les \|\Dr_N\|_{\mathcal{H}^{1}}\Big(2^{-\frac{5}{8}j_0}B^{\frac{1}{2}}_{1,N}(\o)+2^{-\frac{j_0}{2}}\Big)+B_{2,N}(\o)+j_0 \notag\\
& \les \log(2+\|	\Dr_N\|_{\mathcal{H}^1})+B^{\frac{1}{2}}_{1,N}(\o)+B_{2,N}(\o),\notag
\end{split}
\end{align}
and,  by arguing as in \cite[Proposition 3.1, eq. (3.27)-(3.28)]{GOTT} the non-negative random variable $X_N(\o)$ defined by
\begin{align*}
X_N(\o) = e^{\epsilon  B_{1, N}^\frac 12 (\o)
+ \epsilon B_{2, N}(\o)},
\end{align*}
satisfies \eqref{H6a} and \eqref{H7} provided  $\epsilon=\epsilon(\ld_*)$ is sufficiently small. This concludes the proof of the proposition. 
\end{proof}


\begin{ackno}\rm
D.G. was supported by the European Research Council (grant no.~864138 ``SingStochDispDyn") and
Y.W. was supported by the EPSRC New Investigator
Award (grant no. EP/V003178/1). Y.W. was also supported by the EPSRC Mathematical Sciences Small Grant (grant no. UKRI1116). The authors are very thankful to Prof. Tadahiro Oh for suggesting the problem.
\end{ackno}






\begin{thebibliography}{99}


\bibitem{BG}
N.~Barashkov, M.~Gubinelli, 
{\it  A variational method for $\Phi^4_3$}, 
Duke Math. J.
169 (2020), no. 17, 3339--3415.

\bibitem{BG2}
N. Barashkov, M. Gubinelli, {\it On the variational method for Euclidean quantum fields in infinite volume}, Probab. Math. Phys. {\bf 4} (2023), no.~4, 761--801.

\bibitem{Baras}
N. Barashkov, P. Laarne, {\it Invariance of $\Phi^4$ measure under nonlinear wave and Schr\"odinger equations on the plane}, arXiv:2211.16111 [math.AP].


\bibitem{BFV14} 
J.~Bellazzini, R.L.~Frank,  N.~Visciglia, {\it 
	Maximizers for Gagliardo-Nirenberg inequalities and related
	non-local problems}, Math. Ann.  360 (2014),
no. 3-4, 653--673.	


\bibitem{BBDBG}
P.B. Blakie, A.S. Bradley, M.J. Davis, R.J. Ballagh, C.W. Gardiner, {\it Dynamics and statistical mechanics of ultra-cold Bose gases using c-field techniques}, Adv. in Phys., 57 (2008), pp. 363--455.
	
	
\bibitem{B}
 V.~Bogachev,
 {\it Gaussian measures, Mathematical Surveys and Monographs}, 62. American Mathematical Society, Providence, RI, 1998. xii+433 pp.
 
 \bibitem{B1}
 J.P.~Boyd, 
{\it Asymptotic coefficients of hermite function series}, 
J. Math. Phys., 54 (1984), no. 3, 382--410.
 
 
\bibitem{BD}
M.~Bou\'e, P.~Dupuis,
{\it A variational representation for certain functionals of Brownian motion},
Ann. Probab. 26 (1998), no. 4, 1641--1659.


\bibitem{BBulut} 
J.~Bourgain,  A.~Bulut, 
{\it Almost sure global well posedness for the radial nonlinear Schr\"odinger equation on the unit ball I: the 2D case.,}
Ann. Inst. H. Poincar\'e Anal. Non Lin\'eaire 31 (2014), no. 6, 1267--1288.

\bibitem{BO94}
J.~Bourgain, 
{\it Periodic nonlinear Schr\"odinger equation and invariant measures}, 
Comm. Math. Phys. 166 (1994), no. 1, 1--26.
	
\bibitem{BO96}
J.~Bourgain, 
{\it Invariant measures for the 2D-defocusing nonlinear Schr\"odinger equation}, Comm. Math. Phys. 176 (1996), no. 2, 421--445. 

\bibitem{BO97}
J.~Bourgain, 
{\it Invariant measures for the Gross-Piatevskii equation,} 
J. Math. Pures Appl. 76 (1997), no. 8, 649--702. 





\bibitem{BS}
D.C.~Brydges, G.~Slade,  
{\it Statistical mechanics of the 2-dimensional focusing nonlinear Schr\"odinger equation}, Comm.Math. Phys. 182, (1996), 485--504. 

\bibitem{BTT}
N.~Burq, L.~Thomann, N.~Tzvetkov, 
{\it Long time dynamics for the one dimensional non linear Schr\"odinger equation,}
 Ann. Inst. Fourier (Grenoble) 63 (2013), no. 6, 2137--2198. 
 
 \bibitem{C1}
 E.~Carlen, J.~Fr\"ohlich, J.~Lebowitz, 
 {\it Exponential relaxation to equilibrium for a one-dimensional focusing non-linear Schr\"odinger equation with noise}, Comm. Math. Phys. 342 (2016), no. 1, 303--332.


\bibitem{C}
L.~Carlitz, 
{\it The relationship of the Hermite to the Laguerre polynomials}, 
Boll. Un. Mat. Ital., Serie 3, 16 (1961), no. 4, 386--390.

\bibitem{D1}
Y.~Deng, 
{\it Two-dimensional nonlinear Schr\"odinger equation with random radial data},
 Anal. PDE 5 (2012), no. 5, 913--960.
 
 \bibitem{D3}
 Y. ~Deng, A.~Nahmod, H.~Yue, 
 {\it Invariant Gibbs measures and global strong solutions for nonlinear Schr\"odinger equations in dimension two}, 
 Ann. of Math. 200 (2024), no. 2, 399--486

\bibitem{D2}
Y.~Deng, A.~Nahmod, H.~Yue, 
{\it Random tensors, propagation of randomness, and nonlinear dispersive equations}, Invent. math. 228 (2022), no. 2, 539--686.

\bibitem{DS}
R. Duine, H. Stoof, {\it Stochastic dynamics of a trapped Bose-Einstein condensate}, Phys. Rev. A, 65 (2001) 013603.

\bibitem{FOW}
J.~Forlano, T.~Oh, Y.~Wang,
{\it  Invariant Gibbs dynamics for the nonlinear Schr\"odinger equations on the disc}, 
preprint. 


 
\bibitem{Frank}
R.~Frank, E.~Lenzmann, L.~Silvestre,
{\it Uniqueness of radial solutions for the fractional Laplacian},
Comm. Pure Appl. Math. 69 (2016), no. 9, 1671--1726. 
 
 

\bibitem{GOTT}
D. Greco, T. Oh, L. Tao, L. Tolomeo, {\it Critical threshold for weakly interacting log-correlated focusing Gibbs measures}, Proc. Amer. Math. Soc. Ser. B {\bf 12} (2025), 150--165.


\bibitem{GLLOW}
D.~Greco, G.~Li, R.~Liang, T.~Oh, Y.~Wang, 
{\it Optimal divergence rate of the focusing Gibbs measures}, arXiv:2310.08783 [math.PR].


\bibitem{GH}
M. Gubinelli and M. Hofmanov\'a, {\it A PDE construction of the Euclidean $\phi_3^4$ quantum field theory}, Comm. Math. Phys. {\bf 384} (2021), no.~1, 1--75


\bibitem{HLP}
G.H.~Hardy, J.E.~Littlewood, G.~P\'olya, 
{\it Inequalities}, Reprint of the 1952 edition. Cambridge Mathematical Library. Cambridge University Press, Cambridge, 1988. xii+324 pp.


\bibitem{RDL}
R.~Imekraz, D.~Robert, L.~Thomann, 
{\it On random Hermite series}, Trans. Amer. Math. Soc. 368 (2016), no. 4, 2763-2792. 

\bibitem{K}
T.~Koornwinder, 
{\it The Addition Formula for Laguerre Polynomials}, 
SIAM J. Math. Anal. 8 (1977), no. 3, 535--540.


\bibitem{LW22} 
R.~Liang,  Y.~Wang, 
{\it Gibbs measure for the focusing fractional NLS on the torus}, 
SIAM. J. Math. Anal. 54 (2022), no. 6, 6096--6118.


\bibitem{LMW}
J.~Lebowitz, P.~Mounaix, W.-M.~Wang, 
{\it Approach to equilibrium for the stochastic NLS,} Comm. Math. Phys. 321 (2013), no. 1, 69--84.



\bibitem{LRS}
J.~Lebowitz, H.~Rose, E.~Speer, 
{\it Statistical mechanics of the nonlinear Schr\"odinger equation}, J. Statist. Phys.
50 (1988), no. 3-4, 657–687.



\bibitem{Nagy}
B.V.Sz.~Nagy, 
{\it \"Uber Integralgleichungen zwischen einer Funktion und ihrer Ableitung,}
 Acta Univ. Szeged. Sect. Sci. Math
  10 (1941), 64--74. 



\bibitem{OOT}
T.~Oh, M.~Okamoto, L.~Tolomeo, 
{\it Focusing $\Phi^4_3$-model with a Hartree-type nonlinearity},
 Mem. Amer. Math. Soc. 304 (2024), no. 1529, vi+143 pp. 


\bibitem{OOT2}
T.~Oh, M.~Okamoto, L.~Tolomeo, 
{\it Stochastic quantization of the $\Phi^3_3$-model}, 
Mem. Eur. Math. Soc., 16. EMS Press, Berlin, 2025, viii+145 pp. 


\bibitem{OQV}
T.~Oh, J.~Quastel, B.~Valk\'o,
{\it  Interpolation of Gibbs measures and white noise for Hamiltonian PDE}, J. Math. Pures Appl. 97 (2012), no. 4, 391--410. 

\bibitem{OST}
T.~Oh, K.~Seong, L.~Tolomeo, 
{\it A remark on Gibbs measures with log-correlated Gaussian fields}, 
Forum Math. Sigma 12 (2024), Paper No. e50. 

\bibitem{OST2}
T.~Oh,    P.~Sosoe,   L.~Tolomeo,  
{\it Optimal integrability threshold for Gibbs measures associated with focusing NLS 
on the torus,}
Invent. Math. 227 (2022), no. 3, 1323--1429.


\bibitem{RSTY}
T.~Robert, K.~Seong, L.~Tolomeo, Y.~ Wang, 
{\it Focusing Gibbs measure with harmonic potential}, 
Ann. Inst. Henri Poincar\'e Probab. Stat. 61 (2025), no. 1, 571--598. 


\bibitem{Simon}
B.~Simon, 
{\it  The $P(\varphi)_2$ Euclidean (quantum) field theory,} Princeton Series in Physics. Princeton University Press, Princeton, N.J., 1974. xx+392 pp.



\bibitem{TW}
L.~Tolomeo, H.~Weber, 
{\it Phase transition for invariant measures of the focusing Schr\"odinger equation}, 
arXiv:2306.07697 [math.AP].


\bibitem{Tzv1} 
N.~Tzvetkov, 
{\it Invariant measures for the nonlinear Schr\"odinger equation on the disc}, 
 Dyn. Partial Differ. Equ. 3 (2006), no. 2, 111--160.
 
 \bibitem{Tzv2} 
N.~Tzvetkov, 
{\it Invariant measures for the defocusing nonlinear Schr\"odinger equation}, 
 Ann. Inst. Fourier (Grenoble) 58 (2008), no. 7, 2543--2604.



\bibitem{Ust}
A.~\"Ust\"unel,
{\it Variational calculation of Laplace transforms via entropy on Wiener space and applications},
J. Funct. Anal. 267 (2014), no. 8, 3058--3083.

\bibitem{Weinstein}
M.~Weinstein,
{\it Nonlinear Schr\"dinger equations and sharp interpolation estimates},
Comm. Math. Phys. 87 (1982/83), no. 4, 567--576. 



\bibitem{Zhang}
X.~Zhang, 
{\it A variational representation for random functionals on abstract Wiener spaces},
J. Math. Kyoto Univ. 49 (2009), no. 3, 475--490. 



\end{thebibliography}
\end{document}